\theoremstyle{plain}
\newtheorem{theorem}{Theorem}[section]
\newtheorem{lemma}[theorem]{Lemma}
\newtheorem{corollary}[theorem]{Corollary}
\newtheorem{observation}[theorem]{Observation}
\theoremstyle{definition}
\newtheorem{definition}[theorem]{Definition}
\newtheorem{proposition}[theorem]{Proposition}
\newtheorem{question}[theorem]{Question}
\theoremstyle{remark}
\newtheorem{remark}[theorem]{Remark}
\begin{document}

\title{A deletion-contraction formula and  monotonicity properties for the polymatroid Tutte polynomial}

\author{Xiaxia Guan$^{a,b,c}$,~~Xian'an Jin$^{b}$,~~Tam\'{a}s K\'{a}lm\'{a}n$^{c}$ \\
\small $^a$Department of Mathematics, Taiyuan University of Technology, P. R. China\\
\small $^b$School of Mathematical Sciences, Xiamen University, P. R. China\\
\small $^c$Department of Mathematics, Institute of Science Tokyo, Japan\\
\small \emph{Email addresses}: guanxiaxia@tyut.edu.cn; xajin@xmu.edu.cn; kalman@math.titech.ac.jp}

\abstract{The Tutte polynomial is a fundamental invariant of  matroids. The polymatroid Tutte polynomial $\mathscr{T}_{P}(x,y)$, introduced by Bernardi,  K\'{a}lm\'{a}n, and Postnikov, is an extension of the classical Tutte polynomial from matroids to polymatroids $P$. In this paper, we first obtain a deletion-contraction formula for $\mathscr{T}_{P}(x,y)$. Then we prove two natural properties of coefficientwise monotonicity, one for containment and one for minors, both for the interior polynomial $x^{n}\mathscr{T}_{P}(x^{-1},1)$ and the exterior polynomial $y^{n}\mathscr{T}_{P}(1,y^{-1})$, where $P$ is a polymatroid over $[n]$. We show by an example that these monotonicity properties do not extend to $\mathscr{T}_{P}(x,y)$. Using deletion-contraction, we obtain  formulas for the coefficients of terms of degree $n-1$ in $\mathscr{T}_{P}(x,y)$. Finally, we characterize hypergraphs $\mathcal{H}=(V,E)$ such that the coefficient of $y^{k}$ in the exterior polynomial of the associated polymatroid $P_{\mathcal{H}}$ attains its maximal value $\binom{|V|+k-2}{k}$ for all $k$ up to some bound.}
\date{\today}

\keywords{Tutte polynomial; Polymatroid; Deletion-contraction formula; Monotonicity}

\maketitle

\section{Introduction}
\label{sec1}

The Tutte polynomial \cite{Tutte} is an important and well-studied topic in graph and matroid theory, having wide applications in statistical physics, knot theory and so on. As a generalization of the one-variable evaluations $T_G(x,1)$ and $T_G(1,y)$ of the Tutte polynomial $T_G(x,y)$ of graphs $G$ to hypergraphs, K\'{a}lm\'{a}n \cite{Kalman1} introduced the interior polynomial $I_{\mathcal{H}}(x)$ and the exterior polynomial $X_{\mathcal{H}}(y)$ for connected hypergraphs $\mathcal{H}$ via internal and external activities of hypertrees. (Hypertrees were first described as `left or right degree vectors' in \cite{Postnikov}.)  Later, K\'{a}lm\'{a}n, Murakami, and Postnikov \cite{Kalman2,Kalman3} established  that certain leading terms of the HOMFLY polynomial \cite{Jones}, which is a generalization of the celebrated Jones polynomial \cite{Jones} in knot theory, of any special alternating link coincide with the common interior polynomial of the pair of hypergraphs derived from the Seifert graph (which is a bipartite graph) of the link.

Integer polymatroids are a generalization of matroids and an abstraction of hypergraphs. Throughout this paper, only the set of bases of integer polymatroids will be considered, and simply called polymatroids. In \cite{Bernardi}, Bernardi, K\'{a}lm\'{a}n, and Postnikov proposed the polymatroid Tutte polynomial $\mathscr{T}_{P}(x,y)$ for polymatroids $P$, which can be reduced to the classical Tutte polynomial $T_{M}(x,y)$ of any matroid $M$.  More precisely,
if $M$ is a matroid of rank $d$ over $[n]$, and  $P=P(M)$ is its corresponding polymatroid, then
\begin{equation}\label{TM} T_{M}(x,y)=\frac{(x+y-xy)^{n}}{x^{n-d}y^{d}}\mathscr{T}_{P}\left(\frac{x}{x+y-xy},\frac{y}{x+y-xy}\right).
\end{equation}
Moreover, for a polymatroid $P$ over $[n]$, the polynomial $\mathscr{T}_{P}(x,y)$ contains both the interior polynomial $I_{P}(x)$ and  the exterior polynomial $X_{P}(y)$ as special cases in the sense that
\begin{equation}\label{IX-T}
I_{P}(x)=x^{n}\mathscr{T}_{P}(x^{-1},1)\ \ \ \text{and}\ \ \ X_{P}(y)=y^{n}\mathscr{T}_{P}(1,y^{-1}),
\end{equation}
 respectively. We remark that $\mathscr{T}_{P}(x,y)$ is equivalent to another polynomial introduced by Cameron and Fink \cite{Cameron}.

One of the most basic properties of the classical Tutte polynomial is its deletion-contraction formula. Let $P$ be a polymatroid on $[n]$ and let $f\colon 2^{[n]}\rightarrow \mathbb{Z}$ be its rank function. For any $t\in [n]$, let $r_{t}=f(\{t\})+f([n]\setminus \{t\})-f([n])$.
In Proposition 4.11 (f) of \cite{Bernardi}, Bernardi et al.\ obtained a deletion-contraction relation of $\mathscr{T}_{P}(x,y)$, for the cases $r_{t}=0$ and $r_{t}=1$, that generalizes the deletion-contraction formula of the classical Tutte polynomial. They also posed the following question.
\begin{question}\label{question} \cite{Bernardi}
Does there exist a deletion-contraction relation in the case $r_{t}>1$?
\end{question}
We answer this question in the affirmative by proving Theorem \ref{deletion-contraction-J}. It is based on the observation that certain natural ``slices'' of polymatroids are again polymatroids. We note that  even if a polymatroid is hypergraphical, not all of its slices will be necessarily so. Let us also mention that in \cite{Chavez}, the authors introduced a polynomial invariant similar (but not equivalent) to $\mathscr{T}_{P}(x,y)$ and established (in a limited sense) a deletion-contraction relation for it.

In 1972, Brylawski \cite{Brylawski} proved that if a matroid $M_{1}$ is a minor of a connected matroid $M_{2}$, then $T_{M_{1}}(x,y)\leq T_{M_{2}}(x,y)$, that is, each coefficient of $T_{M_{1}}(x,y)$ is less than or equal to the corresponding coefficient of $T_{M_{2}}(x,y)$. Unfortunately, we show by an example in Remark \ref{counterexample} that this monotonicity property does not hold for the polymatroid Tutte polynomial, not even if we make the substitution in it suggested by \eqref{TM}.

In \cite{Stanley},  Stanley showed  that if $\Delta$ and $\Delta'$ are lattice polytopes in $\mathbb{R}^{m}$ with $\Delta'\subseteq \Delta$, then $h^{\ast}_{\Delta'}\leq h^{\ast}_{\Delta}$, where $h^{\ast}$ denotes Ehrhart's $h^{\ast}$-polynomial.
In \cite{Kato} Kato observed, based on results of K\'{a}lm\'{a}n and Postnikov \cite{Kalman3}, that the interior polynomial of any connected hypergraph is equal to the $h^{\ast}$-polynomial of the so-called root polytope of the associated bipartite graph.
 The two theorems together imply a natural monotonicity property of the interior polynomial of hypergraphs, that is, for two connected hypergraphs $\mathcal{H}$ and $\mathcal{H}'$, if
 the associated bipartite graph  of  $\mathcal{H}'$  is a subgraph of the associated bipartite graph  of  $\mathcal{H}$, then $I_{\mathcal{H}'}(x)\leq I_{\mathcal{H}}(x)$.

 In this paper we extend this monotonicity property to the interior polynomial and the exterior polynomial of polymatroids, see Theorems \ref{subset-monotonicity} and \ref{minor-monotonicity}. Namely, we show that for two polymatroids $P$ and $P'$, if $P'\subseteq P$ or $P'$ is a minor of $P$, then $I_{P'}(x)\leq I_{P}(x)$ and $X_{P'}(y)\leq X_{P}(y)$. In hypergraphical cases, we obtain that if the associated bipartite graph of $\mathcal{H}'=(V',E')$  is a subgraph of the associated bipartite graph of $\mathcal{H}=(V,E)$ (even if $\mathcal{H}$ and $\mathcal{H}'$ are not necessarily connected), then $I_{\mathcal{H}'}(x)\leq I_{\mathcal{H}}(x)$, if $V'\subseteq V$ and $E'\subseteq E$, then  $X_{\mathcal{H}'}(y)\leq X_{\mathcal{H}}(y)$ (see Corollary \ref{HG-monotonicity}).

We also examine certain individual coefficients of the polymatroid Tutte polynomial. Let $P$ be a polymatroid over $[n]$. From Proposition 4.11 (d) of \cite{Bernardi}, one knows that $[x^{n-k}y^{k}]\mathscr{T}_{P}(x,y)=\binom{n}{k}$ for any $k\in[n]\cup\{0\}$. In other words, the top degree terms of $\mathscr{T}_{P}(x,y)$ form a standard binomial expansion. Let $f$ be the rank function of the polymatroid $P$. In \cite{Guan3}, the authors computed the coefficients $[x^{n-1}]\mathscr{T}_{P}(x,1)=\sum\limits_{i\in [n]}f(\{i\})-f([n])$ and  $[y^{n-1}]\mathscr{T}_{P}(1,y)=\sum\limits_{i\in [n]}f([n]\setminus \{i\})-(n-1)f([n])$. From there it is easy to get formulas for the coefficients $[x^{n-1}y^{0}]\mathscr{T}_{P}(x,y)$
and $[x^{0}y^{n-1}]\mathscr{T}_{P}(x,y)$. As a generalization, using our deletion-contraction relation of $\mathscr{T}_{P}(x,y)$, we  obtain $$[x^{n-k}y^{k-1}]\mathscr{T}_{P}(x,y)=\sum_{\stackrel{S\subseteq [n]}{|S|=k-1}}f(S)+\sum_{\stackrel{S'\subseteq [n]}{|S'|=k}}f(S')-\binom{n}{k-1}f([n])-k\binom{n}{k}$$ for any $k\in[n]$ in Theorem \ref{coe-degree-n-1'}.
Likewise,  formulas for the coefficients $[x^{n-2}y^{0}]\mathscr{T}_{P}(x,y)$ and $[x^{0}y^{n-2}]\mathscr{T}_{P}(x,y)$ can be obtained as well (see Theorem \ref{coe-degree-n-2}).
They imply formulas for the coefficients $[x^{n-2}]\mathscr{T}_{P}(x,1)$
and $[y^{n-2}]\mathscr{T}_{P}(1,y)$ (see Corollary \ref{n-2-coe}).

Connectivity plays a critical role in graphs and hypergraphs. In Theorem \ref{connected-exterior2}, using our deletion-contraction formula and monotonicity property of the exterior polynomial, we prove that for all $k\geq0$ and for any bipartite graph $G=(V\cup E,\mathcal{E})$, the subgraph $G-E'$ is connected for all $E'\subseteq E$ with $|E'|=k$, if and only if $[y^{i}]X_{\mathcal{H}}(y)=\binom{|V|+i-2}{i}$ for any $i\leq k$, where $\mathcal{H}$ is the hypergraph induced by $G$ such that $E$ is the set of hyperedges. We remark that the coefficient $[y^{i}]X_{\mathcal{H}}(y)$ can never be higher than $\binom{|V|+i-2}{i}$. Moreover, this implies that for a connected graph $G=(V,E)$ and an integer $k\geq 0$, $[y^{i}]T_{G}(1,y)=\binom{|E|-i-1}{|V|-2}$ for all $i$ with $|E|-|V|+1-k\leq i\leq |E|-|V|+1$ if and only if $G$ is $(k+1)$-edge connected.

 The paper is organized as follows. In Section 2, we make some necessary preparations. Section 3 is devoted to the deletion-contraction relation of the polymatroid Tutte polynomial. Our monotonicity properties of the interior polynomial and the exterior polynomial for polymatroids are proven in Section 4. In Section 5, we apply the deletion-contraction formula of Section 3 to derive formulas for the coefficients $[x^{n-k}y^{k-1}]\mathscr{T}_{P}(x,y)$ for any $k\in[n]$,
$[x^{n-2}y^{0}]\mathscr{T}_{P}(x,y)$, and $[x^{0}y^{n-2}]\mathscr{T}_{P}(x,y)$
 for polymatroids $P$ and to characterize the connectivity of hypergraphs using the exterior polynomial.
We end with posing some open questions.

\section{Preliminaries}
\label{sec2}

In this section, we will give some definitions and summarize some known results used in the next sections. Throughout the paper, let  $[n]=\{1,2,\ldots,n\}$, $2^{[n]}=\{I|I\subseteq[n]\}$, and let $\mathbf{e}_{1},\mathbf{e}_{2},\ldots,\mathbf{e}_{n}$ denote the canonical basis of $\mathbb{R}^{n}$.
We first recall the definition of polymatroids.
\begin{definition} \label{def polymatroid}
A \emph{polymatroid} $P=P_{f}\subseteq \mathbb{Z}^{n}$ (in other words, over the ground set $[n]$) with rank function $f$ is  given as $$P=\left\{(a_{1},\ldots,a_{n})\in \mathbb{Z}^{n}\bigg|\sum_{i\in I}a_{i}\leq f(I) \ \text{for any}\ I\subseteq [n] \ \text{and} \ \sum_{i\in [n]}a_{i}=f([n])\right\},$$ where   $f\colon 2^{[n]}\rightarrow \mathbb{Z}$ satisfies

\begin{enumerate}
\item[(i)] $f(\emptyset)=0$;

\item[(ii)] $f(I)+f(J)\geq f(I\cup J)+f(I\cap J)$ for any $I,J\subseteq [n]$ (submodularity).
\end{enumerate}
\end{definition}

Polymatroids are non-empty. It is also easy to see that the base polytope of any matroid (more precisely, the set of its vertices)
is a polymatroid. We remark that in other papers on the subject, cf. \cite{Edmonds}, polymatroids are often defined as slightly larger sets, and the set $P$ of Definition \ref{def polymatroid} is referred to as the set of integer bases of a polymatroid.
We, too, will call the elements of $P$ \emph{bases}.

Conversely, if $P$ is a polymatroid on $[n]$, then its rank function $f=f_{P}\colon 2^{[n]}\rightarrow \mathbb{Z}$ can be recovered as
\begin{equation}\label{RF}
f_{P}(I)=\max_{\mathbf{a}\in P}\sum_{i\in I} a_{i}, \ \text{for any subset}\ I\subseteq [n].
\end{equation}
 We refer the readers to \cite{Edmonds} and chapter 44 of \cite{Schrijver} for more details.

A \emph{hypergraph} is a  pair $\mathcal{H}=(V,E)$, where $V$ is a finite set and $E$ is a finite multiset of non-empty subsets of $V$. Elements of $V$ are called \emph{vertices} and  elements of $E$ are called \emph{hyperedges}, respectively, of the hypergraph. For a hypergraph $\mathcal{H}=(V,E)$, its \emph{associated bipartite graph} $\mathrm{Bip} \mathcal{H}$ is defined as follows.
The sets $V$ and $E$ are the partite sets of $\mathrm{Bip} \mathcal{H}$, and an element $v$ of $V$ is connected to an element $e$ of $E$ in $\mathrm{Bip} \mathcal{H}$ if and only if $v\in e$.

For a subset $E'\subseteq E$, let $\mathrm{Bip} \mathcal{H}|_{E'}$ denote the bipartite graph formed by $E'$, all edges of $\mathrm{Bip} \mathcal{H}$ incident with elements of $E'$, and their endpoints in $V$. Define $\mu(E')=0$ for $E'=\emptyset$, and $\mu(E')=|\bigcup E'|-c(E')$ for $E'\neq \emptyset$, where $\bigcup E'=V\cap (\mathrm{Bip} \mathcal{H}|_{E'})$ and $c(E')$ is the number of connected components of $\mathrm{Bip} \mathcal{H}|_{E'}$.
It is known that $\mu$ is submodular; see, e.g., \cite{Kalman1} for a proof.
In that sense, polymatroids are an abstraction of hypergraphs. The elements of the polymatroid induced by $\mu$ (in the sense of Definition \ref{def polymatroid}) will be referred  to as  hypertrees because they are essentially degree distribution of spanning forests of $\mathrm{Bip} \mathcal{H}$, cf. \cite{Kalman1}. A polymatroid is called a \emph{hypergraphical polymatroid}, denoted by $P_{\mathcal{H}}$, if it is the set of all hypertrees of some hypergraph $\mathcal{H}$.

A nonempty finite subset $B\subseteq \mathbb{Z}^{n}$ is a polymatroid on $[n]$ if and only if $B$ satisfies the following two conditions \cite{Herzog}.

\begin{enumerate}
\item[(i)] For any $\mathbf{a}\in B$ and $\mathbf{b}\in B$, we have $\sum\limits_{i\in [n]}a_{i}=\sum\limits_{i\in [n]}b_{i}$.
\item[(ii)] For any $\mathbf{a}\in B$, $\mathbf{b}\in B$, and any $i\in [n]$ with $a_{i}>b_{i}$, there exists some $j\in [n]$ such that $a_{j}<b_{j}$ and $\mathbf{a}+\mathbf{e}_{j}-\mathbf{e}_{i}\in B$, as well as $\mathbf{b}+\mathbf{e}_{i}-\mathbf{e}_{j}\in B$.
\end{enumerate}

Condition (ii) is called the \emph{Exchange Axiom}. This description easily implies that adding the same vector of $\mathbb{Z}^{n}$ to each element  of  a polymatroid yields another polymatroid.

We next recall the (internal and external) activities of a basis of a polymatroid. Note that the following definition relies on the natural order of the set $[n]$.

\begin{definition}
Let $P$ be a polymatroid  over $[n]$.
For a basis $\mathbf{a}\in P$, an index $i\in [n]$ is \emph{internally active} if $\mathbf{a}-\mathbf{e}_{i}+\mathbf{e}_{j}\notin P$ for any $j<i$. Let $\mathrm{Int}(\mathbf{a})=\mathrm{Int}_{P}(\mathbf{a})\subseteq [n]$ denote the set of all
internally active indices with respect to $\mathbf{a}$. Furthermore, let $\iota(\mathbf{a})=|\mathrm{Int}(\mathbf{a})|$ and $\overline{\iota}(\mathbf{a})=n-|\mathrm{Int}(\mathbf{a})|$.

For a basis $\mathbf{a}\in P$, we call $i\in [n]$ \emph{externally active} if $\mathbf{a}+\mathbf{e}_{i}-\mathbf{e}_{j}\notin P$ for any $j<i$. Let $\mathrm{Ext}(\mathbf{a})=\mathrm{Ext}_{P}(\mathbf{a})\subseteq [n]$ denote the set of all externally active indices with respect to $\mathbf{a}$, and let $\epsilon(\mathbf{a})=|\mathrm{Ext}(\mathbf{a})|$ and $\overline{\epsilon}(\mathbf{a})=n-|\mathrm{Ext}(\mathbf{a})|$.
\end{definition}

Let $P$ be a polymatroid  over $[n]$ and $f$ be its rank function. For a basis $\mathbf{a}\in P$, let
$$\mathcal{I}(\mathbf{a}):=\mathcal{I}_{P}(\mathbf{a})=\left\{I\subseteq [n]\bigg| \sum_{i\in I}a_{i}= f(I)\right\}.$$
We call this the set of \emph{tight} sets for $\mathbf{a}$.

We now state a conclusion obtained in Lemma 4.2 in \cite{Bernardi}.

\begin{theorem} \cite{Bernardi}\label{ia}
Let $P$ be a polymatroid  over $[n]$. For any $\mathbf{a}\in P$,
\begin{itemize}
  \item [(i)] an index $i\in [n]$ is internally active with respect to $\mathbf{a}$
if and only if there exists a subset $I\subseteq [n]$ such that $i=\min(I)$ and $[n]\setminus I\in \mathcal{I}(\mathbf{a})$;
  \item [(ii)] an index $i\in [n]$ is externally active with respect to $\mathbf{a}$
if and only if there exists a subset $I'\subseteq [n]$ such that $i=\min(I')$ and $I'\in \mathcal{I}(\mathbf{a})$.
\end{itemize}
\end{theorem}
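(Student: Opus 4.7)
The plan is to prove both parts by the same two-step template: for each ``failed'' single-coordinate exchange, pull out a tight witness set, and then combine these witnesses using the lattice structure on $\mathcal{I}(\textbf{a})$ guaranteed by Theorem \ref{tight}.

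First I would unpack what the non-membership $\textbf{a}-\textbf{e}_{i}+\textbf{e}_{j}\notin P$ actually means. Since the total sum is preserved, the only way this vector leaves $P$ is that some rank inequality $\sum_{k\in K}a_{k}\leq f(K)$ becomes strict (in the wrong direction). Comparing contributions from $i$ and $j$, such a $K$ must contain $j$ and exclude $i$, and moreover $\sum_{k\in K}a_{k}$ must already equal $f(K)$, i.e.\ $K\in\mathcal{I}(\textbf{a})$. Analogously, $\textbf{a}+\textbf{e}_{i}-\textbf{e}_{j}\notin P$ forces the existence of a tight set $K'$ with $i\in K'$ and $j\notin K'$. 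This observation is the workhorse of both directions.

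For the forward implication of (1), assume $i$ is internally active. For each $j<i$ choose a tight set $I_{j}$ with $j\in I_{j}$, $i\notin I_{j}$. Set $J=\bigcup_{j<i}I_{j}$; by Theorem \ref{tight}, $J\in\mathcal{I}(\textbf{a})$, and by construction $\{1,\dots,i-1\}\subset J$ while $i\notin J$. Then $I:=[n]\setminus J$ satisfies $\min(I)=i$ and $[n]\setminus I=J\in\mathcal{I}(\textbf{a})$. For the forward implication of (2), use the dual construction: for each $j<i$ pick a tight $I'_{j}$ with $i\in I'_{j}$, $j\notin I'_{j}$, and take $I'=\bigcap_{j<i}I'_{j}$, which is tight by Theorem \ref{tight} and has $\min(I')=i$.

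The converse directions are straightforward checks. If $[n]\setminus I\in\mathcal{I}(\textbf{a})$ with $i=\min(I)$, then for any $j<i$ we have $j\in [n]\setminus I$ and $i\notin[n]\setminus I$, so summing $\textbf{a}-\textbf{e}_{i}+\textbf{e}_{j}$ over $[n]\setminus I$ yields $f([n]\setminus I)+1$, violating the rank inequality and showing $\textbf{a}-\textbf{e}_{i}+\textbf{e}_{j}\notin P$; this is exactly internal activity. The externally active case is symmetric, testing the rank inequality on $I'$ itself. The main (and really the only) obstacle is the initial step of verifying that a violating subset can be chosen to be tight and to have the correct behavior on $\{i,j\}$; once that local structural lemma is in hand, Theorem \ref{tight} does all the remaining work through a single union (for internal) or intersection (for external).
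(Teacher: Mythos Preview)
Your argument is correct. The key local lemma---that $\textbf{a}-\textbf{e}_{i}+\textbf{e}_{j}\notin P$ forces a tight set containing $j$ and avoiding $i$ (and dually for $\textbf{a}+\textbf{e}_{i}-\textbf{e}_{j}$)---is exactly right, and the union/intersection trick via Theorem~\ref{tight} cleanly assembles the individual witnesses into a single set with the required minimum. The edge case $i=1$ is also handled, since the empty union $\emptyset$ and the empty intersection $[n]$ are both tight.

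Note, however, that the paper does not actually supply a proof of this statement: it is quoted from \cite{Bernardi} (Lemma~4.2 there) and used as a black box. So there is no in-paper argument to compare against. Your proof is essentially the standard one and matches what one finds in \cite{Bernardi}.
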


To close this section, we will recall some basics of the polymatroid Tutte polynomial.

\begin{definition}\cite{Bernardi}\label{pop}
Let $P$ be a polymatroid  over $[n]$. The \emph{polymatroid Tutte polynomial} $\mathscr{T}_{P}(x,y)$ is defined as
$$\mathscr{T}_{P}(x,y):=\sum_{\mathbf{a}\in P}x^{oi(\mathbf{a})}y^{oe(\mathbf{a})}(x+y-1)^{ie(\mathbf{a})},$$
where $oi(\mathbf{a}):=|\mathrm{Int}(\mathbf{a})\setminus \mathrm{Ext}(\mathbf{a})|$, $oe(\mathbf{a}):=|\mathrm{Ext}(\mathbf{a})\setminus \mathrm{Int}(\mathbf{a})|$, and $ie(\mathbf{a}):=|\mathrm{Int}(\mathbf{a})\cap \mathrm{Ext}(\mathbf{a})|$.
\end{definition}

Note that the smallest index $i=1$ must be simultaneously internally and externally active  for any basis of any polymatroid. Thus the polynomial $\mathscr{T}_{P}(x,y)$ is always divisible by $x+y-1$.

For a polymatroid $P$ over $[n]$, K\'{a}lm\'{a}n \cite{Kalman1} defined
\emph{the interior polynomial} $$I_{P}(x):=\sum_{\mathbf{a}\in P}x^{\overline{\iota}(\mathbf{a})}$$ and \emph{the exterior polynomial} $$X_{P}(y):=\sum_{\mathbf{a}\in P}y^{\overline{\epsilon}(\mathbf{a})}.$$ It is easy to verify that if $P$ is a polymatroid over $[n]$, then
 $I_{P}(x)=x^{n}\mathscr{T}_{P}(x^{-1},1)$ and $X_{P}(y)=y^{n}\mathscr{T}_{P}(1,y^{-1})$. Moreover, the coefficients of $I_{P}(x)$ and $X_{P}(y)$ are non-negative integers.

Let $P$ be a polymatroid  over $[n]$. Let the symmetric group $S_{n}$ act on $\mathbb{Z}^{n}$ by permutations of coordinates. I.e., for a permutation
$w\in S_{n}$ and a point $\mathbf{a}= (a_{1},\ldots, a_{n})\in \mathbb{Z}^{n}$, define
$w(\mathbf{a}): = (a_{w(1)},\ldots, a_{w(n)})$, and
$w(P) := \{w(\mathbf{a})| \mathbf{a} \in P\}.$ For any $\mathbf{c}=(c_{1},\ldots,c_{n})\in \mathbb{Z}^{n}$, let
$P+\mathbf{c} : =\{(a_{1}+c_{1},\ldots,a_{n}+c_{n})|(a_{1},\ldots,a_{n})\in P\}$.
The \emph{dual} polymatroid  of  a polymatroid $P$ is 
$-P:=\{(-a_{1},\ldots,-a_{n})|(a_{1},\ldots,a_{n})\in P\}$.

The following invariance properties of the polymatroid Tutte polynomial have been proven in \cite{Bernardi}.
\begin{theorem}\cite{Bernardi}\label{invariants}
Let $P$ be a polymatroid on $[n]$. Then
\begin{itemize}
 \item [(i)] $\mathscr{T}_{P}(x,y)=\mathscr{T}_{P+\mathbf{c}}(x,y)$ for any $\mathbf{c}\in \mathbb{Z}^{n}$ (translation invariance);
  \item [(ii)] $\mathscr{T}_{P}(x,y)=\mathscr{T}_{w(P)}(x,y)$ for any $w\in S_{n}$ ($S_{n}$-invariance);
  \item [(iii)] $\mathscr{T}_{P}(x,y)=\mathscr{T}_{-P}(y,x)$, where $-P$ is the dual polymatroid of $P$ (duality).
\end{itemize}
\end{theorem}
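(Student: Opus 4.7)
The plan is to dispatch the three statements separately, with the main effort concentrated on the $S_n$-invariance.

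For translation invariance (1), I would first check, straight from the max-formula \eqref{RF}, that $f_{P+\textbf{c}}(I) = f_P(I) + \sum_{i \in I} c_i$. This rewrites the tightness condition as $\sum_{i \in I} (a_i + c_i) = f_{P+\textbf{c}}(I) \Leftrightarrow \sum_{i \in I} a_i = f_P(I)$, so the bijection $\textbf{a} \mapsto \textbf{a} + \textbf{c}$ identifies the tight-set families: $\mathcal{I}_{P+\textbf{c}}(\textbf{a}+\textbf{c}) = \mathcal{I}_P(\textbf{a})$. By Theorem~\ref{ia}, both $\mathrm{Int}$ and $\mathrm{Ext}$ are determined by these families together with the fixed order on $[n]$, so the triple $(oi, oe, ie)$ is preserved and the two Tutte polynomials agree term-by-term.

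For duality (3), the negation $\textbf{a} \mapsto -\textbf{a}$ gives a bijection $P \to -P$. The equivalence $-\textbf{a} - \textbf{e}_i + \textbf{e}_j \in -P \Leftrightarrow \textbf{a} + \textbf{e}_i - \textbf{e}_j \in P$ transforms the internal-activity test for $-P$ into the external-activity test for $P$, and symmetrically for the other direction. Hence $\mathrm{Int}_{-P}(-\textbf{a}) = \mathrm{Ext}_P(\textbf{a})$ and $\mathrm{Ext}_{-P}(-\textbf{a}) = \mathrm{Int}_P(\textbf{a})$, so $oi$ and $oe$ swap while $ie$ is unchanged. Substituting into Definition~\ref{pop} gives $\mathscr{T}_{-P}(x,y) = \mathscr{T}_P(y,x)$, which is the stated identity after interchanging $x$ and $y$.

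The hard part is $S_n$-invariance (2). Since $S_n$ is generated by adjacent transpositions $s_k = (k,k+1)$, it suffices to show $\mathscr{T}_P = \mathscr{T}_{s_k(P)}$ for each $k$. A direct computation yields $f_{s_k(P)}(I) = f_P(s_k(I))$, so the bijection $\textbf{a} \leftrightarrow s_k(\textbf{a})$ induces the bijection $\mathcal{I}_P(\textbf{a}) \leftrightarrow s_k(\mathcal{I}_P(\textbf{a}))$ on tight-set families. The obstruction is that $\min I$ and $\min s_k(I)$ differ whenever exactly one of $\{k,k+1\}$ lies in $I$ and is its minimum, so via Theorem~\ref{ia} the sets $\mathrm{Int}$ and $\mathrm{Ext}$ can genuinely exchange the labels $k$ and $k+1$ between $\textbf{a}$ and $s_k(\textbf{a})$; a naive term-by-term bijection therefore fails. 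The cleanest route, which I would pursue first, is induction on $n$ via the new deletion-contraction relation (Theorem~\ref{deletion-contraction-J}): for $n \geq 3$, pick any $t \in [n] \setminus \{k, k+1\}$, observe that deletion and contraction at $t$ commute with $s_k$, and apply the inductive hypothesis to $P \setminus t$ and $P / t$. The base cases $n \leq 2$ reduce to enumerating tight-set patterns by hand. As a fallback, one can partition $P$ into classes according to the interaction between the tight-set lattice of Theorem~\ref{tight} and $\{k, k+1\}$, and verify that the contributions to $\mathscr{T}_P$ and $\mathscr{T}_{s_k(P)}$ agree on each class. Keeping this bookkeeping tractable when several tight sets simultaneously interact with $\{k,k+1\}$ is where the principal technical difficulty lies.
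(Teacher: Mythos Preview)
The paper does not prove Theorem~\ref{invariants}; it is quoted from \cite{Bernardi} and used as a black box. So there is no ``paper's own proof'' to match, and your proposal must be judged on its internal logic within this paper's development.

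Your arguments for (1) and (3) are correct and essentially the standard ones.

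For (2), however, your primary route is circular in this paper's logical order. You propose to invoke Theorem~\ref{deletion-contraction-J} in an induction on $n$, but look at the first line of that theorem's proof: ``By the $S_n$-invariance of $\mathscr{T}_P(x,y)$, we may let $t=n$.'' The deletion--contraction formula, as established here, already \emph{uses} part (2). One can salvage part of the plan: the proof of Theorem~\ref{deletion-contraction-J} goes through verbatim, without any appeal to $S_n$-invariance, in the special case $t=n$ (this is exactly why the reduction to $t=n$ is made---Claims~1 and~2 need the sliced coordinate to be maximal so that witnesses $k'<k$ automatically avoid $t$). Slicing both $P$ and $s_k(P)$ at $t=n$ then yields, for $k\le n-2$, slices related by $s_k$ on $[n-1]$, and the induction closes. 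But for $k=n-1$ the transposition moves the coordinate $n$ itself, the slices of $P$ and $s_{n-1}(P)$ at $t=n$ are not related by any permutation of $[n-1]$, and you have no non-circular deletion--contraction available at any $t\in[n-2]$. So the inductive scheme, as written, leaves the generator $s_{n-1}$ unhandled.

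Your fallback---a direct case analysis of how the tight-set lattice of Theorem~\ref{tight} interacts with $\{k,k+1\}$, showing that contributions match class by class---is the viable path, and is in spirit what \cite{Bernardi} actually does. That argument is self-contained and does not presuppose any part of Theorem~\ref{invariants}, so it avoids the circularity; but it is genuinely the ``principal technical difficulty'' you flag, not merely a backup, and would need to be carried out rather than gestured at.
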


Regarding Theorem \ref{invariants} (ii), we note again that the order $1<2<\ldots<n$ plays an implicit role in Definition \ref{pop}, but it does turn out that $\mathscr{T}_{P}$ depends only on $P$ and not on this order. In particular, $I_{P}$ and $X_{P}$ also depend only on $P$, that is, they also satisfy $S_{n}$-invariance. Moreover, for $I_{P}$ and $X_{P}$, duality takes the form
\begin{equation}\label{duality}
I_{-P}=X_{P} \quad \text{and} \quad I_{P}=X_{-P}.
\end{equation}
These two claims are equivalent as $-(-P)=P$.

\section{A deletion-contraction formula}
\label{sec3}

In this section, we will study a deletion-contraction formula of the polymatroid Tutte polynomial which answers Question \ref{question}.

Let $P$ be a polymatroid on $[n]$ and $f$ be its rank function. For convenience, for any $t\in [n]$, let $\alpha_{t}=f([n])-f([n]\setminus \{t\})$, $\beta_{t}=f(\{t\})$ and $T_{t}=\{\alpha_{t}, \alpha_{t}+1,\ldots,\beta_{t}\}$. For any $j\in T_{t}$, define
 $P^{t}_{j}:=\{(a_{1},\ldots,a_{n})\in P\mid a_{t}=j\}$ and its projection
\begin{equation}\label{projection}
\widehat{P}^{t}_{j}:=\{(a_{1},\ldots,a_{t-1},a_{t+1},\ldots,a_{n})\in \mathbb{Z}^{n-1}\mid (a_{1},\ldots,a_{n})\in P^{t}_{j}\}.
\end{equation}
 Here the range $T_{t}$ is chosen such that $P^{t}_{j}$ and $\widehat{P}^{t}_{j}$ are nonempty if and only if $j\in T_{t}$. By the Exchange Axiom, $P^{t}_{j}$ and $\widehat{P}^{t}_{j}$ are polymatroids on $[n]$ and on $[n]\setminus\{t\}$, respectively. We next study the relation between the rank function of $P$ and the rank function of the polymatroid $P^{t}_{j}$.  We first state a basic property of polymatroids. See \cite{Kalman1} for a proof.

\begin{lemma}
\label{change}
Let $P$ be a polymatroid on $[n]$. For any basis $\mathbf{a}\in P$ and any subset $I\subseteq[n]$, if $I\notin\mathcal{I}(\mathbf{a})$, then there are $j\in I$ and $k\in [n]\setminus I$ such that $\mathbf{a}-\mathbf{e}_{k}+\mathbf{e}_{j}\in P$.
\end{lemma}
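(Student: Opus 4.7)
The plan is to combine the Exchange Axiom with a minimality argument on the $\ell_1$-distance between two bases. Since $I\notin\mathcal{I}(\textbf{a})$, we have $\sum_{i\in I}a_{i}<f(I)$. By the description \eqref{RF} of $f=f_P$, and because $P$ is a bounded set of integer vectors (hence finite), there exists a basis $\textbf{b}\in P$ attaining this maximum, i.e.\ $\sum_{i\in I}b_{i}=f(I)$. Among all such $I$-saturating bases, I would select one $\textbf{b}$ that minimizes the $\ell_1$-distance $\|\textbf{a}-\textbf{b}\|_{1}=\sum_{i\in[n]}|a_{i}-b_{i}|$.

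Because $\sum_{i\in I}b_{i}>\sum_{i\in I}a_{i}$, some index $j\in I$ must satisfy $b_{j}>a_{j}$. Applying the Exchange Axiom with $\textbf{b}$ and $\textbf{a}$ playing the roles of the first and second basis, respectively, and using the index $j$ with $b_{j}>a_{j}$, one obtains an index $k\in[n]$ with $a_{k}>b_{k}$ such that simultaneously $\textbf{b}-\textbf{e}_{j}+\textbf{e}_{k}\in P$ and $\textbf{a}-\textbf{e}_{k}+\textbf{e}_{j}\in P$. The latter membership is precisely the desired conclusion, provided that $k\notin I$.

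The main obstacle is thus to rule out the possibility $k\in I$. Suppose for contradiction that $k\in I$, and set $\textbf{b}'=\textbf{b}-\textbf{e}_{j}+\textbf{e}_{k}\in P$. Since both $j$ and $k$ lie in $I$, we still have $\sum_{i\in I}b'_{i}=f(I)$, so $\textbf{b}'$ is also an $I$-saturating basis. On the other hand, $b_{j}>a_{j}$ and $a_{k}>b_{k}$ (with all coordinates integer), so passing from $\textbf{b}$ to $\textbf{b}'$ strictly decreases both $|a_{j}-b_{j}|$ and $|a_{k}-b_{k}|$ by $1$, while leaving the remaining coordinates unchanged. Hence $\|\textbf{a}-\textbf{b}'\|_{1}=\|\textbf{a}-\textbf{b}\|_{1}-2$, contradicting the minimality of $\textbf{b}$. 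We conclude that $k\in[n]\setminus I$, and the lemma follows.
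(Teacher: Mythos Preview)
The paper does not supply its own proof of this lemma; it is quoted from \cite{Kalman1} without argument. Your proof is correct and self-contained: the key observation is that the Exchange Axiom, as stated in the paper (the symmetric form from \cite{Herzog}), simultaneously yields $\textbf{a}-\textbf{e}_{k}+\textbf{e}_{j}\in P$ and $\textbf{b}-\textbf{e}_{j}+\textbf{e}_{k}\in P$, so that if the exchange index $k$ were to land back in $I$ you could replace $\textbf{b}$ by a strictly closer $I$-saturating basis. The $\ell_1$-minimality device is a clean way to force $k\notin I$; nothing is missing.
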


\begin{proposition}\label{rank function of Pj}
Let $P$ be a polymatroid on $[n]$  and $f$ be its rank function. For some $t\in [n]$, let $\alpha_{t}$, $\beta_{t}$, $T_{t}$ and $\widehat{P}^{t}_{j}$ be defined as above. Let $f^{t}_{j}$ be the rank function of the polymatroid $\widehat{P}^{t}_{j}$. Then for any subset $I\subseteq [n]\setminus \{t\}$, we have $f^{t}_{j}(I)=\min\{f(I), f(I\cup \{t\})-j\}$.
\end{proposition}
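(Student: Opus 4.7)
The plan is to establish the two inequalities separately. The easy bound $f^t_j(I) \leq \min\{f(I), f(I \cup \{t\}) - j\}$ is immediate: any $\textbf{a}' \in \widehat{P}^t_j$ lifts to $\textbf{a} \in P^t_j \subset P$ with $a_t = j$, so $\sum_{i \in I} a'_i \leq f(I)$ and $\sum_{i \in I} a'_i = \sum_{i \in I \cup \{t\}} a_i - j \leq f(I \cup \{t\}) - j$; taking the maximum over $\textbf{a}'$ yields the claim.

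The central tool for the reverse inequality will be a face-decomposition lemma that I will prove first: for any $S \subset [n]$ with $F_S := \{\textbf{a} \in P \mid S \in \mathcal{I}(\textbf{a})\}$ non-empty, $F_S = F^{(1)}_S \times F^{(2)}_S$, where $F^{(1)}_S$ is the polymatroid on $S$ with rank function $f|_{2^S}$ and $F^{(2)}_S$ is the polymatroid on $[n] \setminus S$ with contracted rank function $T \mapsto f(T \cup S) - f(S)$. The inclusion ``$\subseteq$'' is routine from restriction. For the converse, given $\textbf{a} = (\textbf{a}^{(1)}, \textbf{a}^{(2)})$ with $\textbf{a}^{(i)} \in F^{(i)}_S$ and an arbitrary $U \subset [n]$, writing $U_1 = U \cap S$ and $U_2 = U \setminus S$, one checks via submodularity that
\begin{equation*}
\sum_{i \in U} a_i \leq f(U_1) + f(U_2 \cup S) - f(S) = f(U \cap S) + f(U \cup S) - f(S) \leq f(U).
\end{equation*}
I expect this structural lemma, and its delicate submodular estimate, to be the main obstacle.

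With the decomposition in hand, I will split by which term attains the minimum. If $j \geq f(I \cup \{t\}) - f(I)$, take $S = I \cup \{t\}$; within $F^{(1)}_S$ the Exchange Axiom forces the $t$-th coordinate to realize every integer in $[f(I \cup \{t\}) - f(I), \beta_t]$, which contains $j$, so any $\textbf{a} \in F_S$ with $a_t = j$ yields $\sum_{i \in I} a_i = f(I \cup \{t\}) - j$. If $j < f(I \cup \{t\}) - f(I)$, take $S = I$; within $F^{(2)}_S$ the $t$-th coordinate realizes $[\alpha_t, f(I \cup \{t\}) - f(I)]$, which again contains $j$, and the corresponding $\textbf{a}$ gives $\sum_{i \in I} a_i = f(I)$. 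The endpoint simplifications $f^t_{\alpha_t}(I) = f(I)$ and $f^t_{\beta_t}(I) = f(I \cup \{t\}) - f(\{t\})$ then fall out of the main formula by one-line submodularity checks, $f(I \cup \{t\}) + f([n] \setminus \{t\}) \geq f([n]) + f(I)$ and $f(I) + f(\{t\}) \geq f(I \cup \{t\})$, which identify which of the two summands is the smaller.
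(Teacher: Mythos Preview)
Your proof is correct but follows a different route from the paper's. The paper establishes the reverse inequality by first constructing a single ``doubly tight'' basis $\textbf{b}\in P$ with both $I\in\mathcal{I}(\textbf{b})$ and $I\cup\{t\}\in\mathcal{I}(\textbf{b})$ (taken as the greedy/lexicographically maximal basis for an order placing $I$ first and $t$ next), so that $b_t=f(I\cup\{t\})-f(I)$. It then iterates Lemma~\ref{change}: when $j<b_t$ one repeatedly shifts mass out of coordinate $t$, and tightness of $I$ forces each shift to land in $[n]\setminus(I\cup\{t\})$, preserving $\sum_{i\in I}a_i=f(I)$; the case $j>b_t$ is symmetric with $I\cup\{t\}$ kept tight. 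You instead prove once the face decomposition $F_S=(P|_S)\times(P/S)$ and then read off the desired basis from the interval property of the $t$-coordinate in the appropriate factor. Both arguments ultimately rest on submodularity, but yours packages it in a single inequality $f(U\cap S)+f(U\cup S)\le f(U)+f(S)$ inside the product lemma (so the ``delicate estimate'' is really just one line), while the paper's is an explicit iterative construction. Your approach yields a reusable structural statement at the cost of a small detour; the paper's is more self-contained and stays closer to the Exchange Axiom throughout. The endpoint simplifications are handled identically in both.
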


\begin{proof}
For any $\mathbf{a}\in P$, we know that $\sum\limits_{i\in I}a_{i}\leq f(I)$ and $\sum\limits_{i\in I}a_{i}+a_{t}\leq f(I\cup \{t\})$. Then for any $j\in T_{t}$, we have that $f^{t}_{j}(I)\leq \min\{f(I), f(I\cup \{t\})-j\}$ as $a_{t}=j$.

We now show that there is a basis $\mathbf{c}\in P^{t}_{j}$ such that $\sum\limits_{i\in I}c_{i}= \min\{f(I), f(I\cup \{t\})-j\}$. Let $\mathbf{b}\in P$ be a basis of $P$ satisfying $\sum\limits_{i\in I}b_{i}=f(I)$ and $\sum\limits_{i\in I}b_{i}+b_{t}=f(I\cup\{t\})$. (Such a basis must exist. For example, it can be taken as the lexicographically maximal basis of $P$ with respect to some order in which $I$ forms the first $|I|$ elements of $[n]$ and  $t$ is the $(|I|+1)$'st.) We separate the following cases.

\begin{enumerate}
\item[(i)] Assume that $f(I)=f(I\cup \{t\})-j$. Then $\mathbf{c}=\mathbf{b}\in P^{t}_{j}$ and it satisfies the required property.

\item[(ii)] Assume that $f(I)< f(I\cup \{t\})-j$. Then $j<f(I\cup \{t\})-f(I)=b_{t}$. Note that $j\geq \alpha_{t}=f([n])-f([n]\setminus \{t\})$. By Lemma \ref{change}, we have that for any $\mathbf{d}\in P$ satisfying $d_{t}>j$, there is $k\in [n]\setminus \{t\}$ such that $\mathbf{d}-\mathbf{e}_{t}+\mathbf{e}_{k}\in P$ as $[n]\setminus \{t\}\notin\mathcal{I}(\mathbf{d})$. This implies that for any $m\in [1,b_{t}-j]$, there is $k_{m}\in [n]\setminus \{t\}$ such that $\mathbf{b}^{m+1}=\mathbf{b}^{m}-\mathbf{e}_{t}+\mathbf{e}_{k_{m}}\in P$, where $\mathbf{b}^{1}=\mathbf{b}$. Note also that $k_{m}\notin I$ as $I\in\mathcal{I}(\mathbf{b}^{m})$ for all $m$. Hence, the basis $\mathbf{c}=\mathbf{b}^{b_{t}-j+1}$ satisfies the required property.

\item[(iii)] Assume that $f(I)> f(I\cup \{t\})-j$. Similarly to (ii), by Lemma \ref{change}, for any $m\in [1,j-b_{t}]$, there is $k'_{m}\in [n]\setminus \{t\}$ such that $\mathbf{c}^{m+1}=\mathbf{c}^{m}+\mathbf{e}_{t}-\mathbf{e}_{k'_{m}}\in P$ as $\{t\}\notin\mathcal{I}(\mathbf{c}^{m})$, where $\mathbf{c}^{1}=\mathbf{b}$. We know that $k'_{m}\notin [n]\setminus (I\cup \{t\})$ as $I\cup \{t\}\in\mathcal{I}(\mathbf{c}^{m})$, that is, $k'_{m}\in I$ for all $m$. This implies that $\mathbf{c}=\mathbf{c}^{j-b_{t}+1}$ satisfies the required property.
\end{enumerate}
The conclusion holds in each case.
\end{proof}

\begin{definition}
Let $P$ be a polymatroid on $[n]$ with rank function $f$. For a subset $A\subseteq [n]$, the \emph{deletion} $P\setminus A$ and \emph{contraction} $P/A$, which are polymatroids on $[n]\setminus A$, are given by the rank functions $f_{P\setminus A}(T)=f(T)$ and $f_{P/A}(T)=f(T\cup A)-f(A)$, for any subset $T\subseteq [n]\setminus A$, respectively.
 \end{definition}

 By the submodularity of $f$, for any subset $I\subseteq [n]\setminus \{t\}$, we have $f(I\cup \{t\})+f([n]\setminus \{t\})\geq f([n])+f(I)$ and $f(\{t\})+f(I)\geq f(I\cup \{t\})$.  Hence, Proposition \ref{rank function of Pj} implies that
 \begin{eqnarray}\label{Del-Con}
 \widehat{P}^{t}_{\alpha_{t}}=P\setminus \{t\}\ \text{and} \ \widehat{P}^{t}_{\beta_{t}}=P/\{t\}.
\end{eqnarray}

The following conclusion is given in Proposition 4.11 (f) of \cite{Bernardi}.

\begin{theorem}\cite{Bernardi}\label{deletion-contraction-1}
Let $P$ be a polymatroid on $[n]$. For some $t\in [n]$,
\begin{enumerate}
\item [(i)] if $\beta_{t}-\alpha_{t}=0$, then $\mathscr{T}_{P}(x,y)=(x+y-1)\mathscr{T}_{\widehat{P}^{t}_{\alpha_{t}}}(x,y)=(x+y-1)\mathscr{T}_{\widehat{P}^{t}_{\beta_{t}}}(x,y)$;
\item[(ii)] if $\beta_{t}-\alpha_{t}=1$, then $\mathscr{T}_{P}(x,y)=x\mathscr{T}_{\widehat{P}^{t}_{\alpha_{t}}}(x,y)+y\mathscr{T}_{\widehat{P}^{t}_{\beta_{t}}}(x,y)$.
\end{enumerate}
\end{theorem}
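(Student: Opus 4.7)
The plan is to verify both cases after using the $S_n$-invariance of $\mathscr{T}_P$ from Theorem \ref{invariants}(2) to reduce to the situation $t=n$. With this choice of $t$, for any index $i<n$, the swaps $\textbf{a}-\textbf{e}_i+\textbf{e}_j$ and $\textbf{a}+\textbf{e}_i-\textbf{e}_j$ defining internal and external activity only involve $j<i<n$, so they leave the $n$-th coordinate unchanged. Hence such a swap lies in $P$ if and only if its projection lies in $\widehat{P}^n_{a_n}$, and the activity status of $i$ viewed from $\textbf{a}\in P$ agrees with that viewed from $\widehat{\textbf{a}}\in\widehat{P}^n_{a_n}$. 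The only remaining question is whether $n$ itself is internally and/or externally active.

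In case (1), $T_n$ consists of the single element $\alpha_n=\beta_n$, so projection $\textbf{a}\mapsto\widehat{\textbf{a}}$ is a bijection between $P$ and $\widehat{P}^n_{\alpha_n}=\widehat{P}^n_{\beta_n}$. Any swap $\textbf{a}\pm\textbf{e}_n\mp\textbf{e}_j$ would force the $n$-th coordinate outside $T_n$ and hence outside $P$, so $n$ is simultaneously internally and externally active for every basis. Each monomial of $\mathscr{T}_P$ therefore picks up an extra factor $(x+y-1)$ compared with the corresponding monomial of $\mathscr{T}_{\widehat{P}^n_{\alpha_n}}$, and summing yields $\mathscr{T}_P(x,y)=(x+y-1)\mathscr{T}_{\widehat{P}^n_{\alpha_n}}(x,y)$.

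In case (2), $T_n=\{\alpha_n,\beta_n\}$ with $\beta_n=\alpha_n+1$, and $P$ partitions as the disjoint union $P^n_{\alpha_n}\sqcup P^n_{\beta_n}$ with both pieces non-empty by the choice of $T_n$. For $\textbf{a}\in P^n_{\alpha_n}$, the swap $\textbf{a}-\textbf{e}_n+\textbf{e}_j$ is excluded because the $n$-th coordinate would drop below $\alpha_n$, so $n$ is internally active; on the other hand, picking any $\textbf{b}\in P^n_{\beta_n}$ and applying the Exchange Axiom to the pair $(\textbf{b},\textbf{a})$ at index $n$ (where $b_n>a_n$) produces $j<n$ with $\textbf{a}+\textbf{e}_n-\textbf{e}_j\in P$, so $n$ is not externally active. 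The symmetric argument shows that for $\textbf{b}\in P^n_{\beta_n}$ the index $n$ is externally but not internally active. Therefore each basis in $P^n_{\alpha_n}$ contributes an extra factor $x$ relative to its projection, each basis in $P^n_{\beta_n}$ contributes an extra factor $y$ relative to its projection, and the sum gives $\mathscr{T}_P(x,y)=x\mathscr{T}_{\widehat{P}^n_{\alpha_n}}(x,y)+y\mathscr{T}_{\widehat{P}^n_{\beta_n}}(x,y)$.

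The main obstacle is the activity analysis of the index $n$ itself in case (2); it rests on the non-emptiness of the two slices (built into the definition of $T_n$) together with the precise form of the Exchange Axiom, which supplies a witness $j<n$ certifying the failure of external activity for bases with $a_n=\alpha_n$ and of internal activity for bases with $a_n=\beta_n$.
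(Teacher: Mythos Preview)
Your proof is correct. Note that the paper does not actually prove this theorem---it is quoted from \cite{Bernardi}---so there is no ``paper's own proof'' to compare against directly. That said, your argument coincides almost exactly with the method the paper uses to prove the generalization in Theorem~\ref{deletion-contraction-J}: reduce to $t=n$ by $S_n$-invariance, observe that activities of indices $i<n$ are unchanged when passing to the slice (your paragraph~1 is the paper's Claim~2), and then determine the activity status of $n$ itself in each slice (your case analysis is the paper's Claim~1). The only noteworthy difference is that for the activity of $n$ the paper appeals to the tight-set criterion of Theorem~\ref{ia}, whereas you invoke the Exchange Axiom directly; both routes are short and valid. Your proof also handles part~(1) explicitly, which the paper's proof of Theorem~\ref{deletion-contraction-J} does not redo (it simply cites Theorem~\ref{deletion-contraction-1}(1) at the very end).
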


In fact, this relation generalizes the deletion-contraction formula of the classical Tutte polynomial. In other words, if $M$ is a matroid, and  $P=P(M)$ is its corresponding polymatroid, then Theorem \ref{deletion-contraction-1} is consistent with the deletion-contraction formula of $T_{M}(x,y)$. Theorem \ref{deletion-contraction-1} (i) and the earlier relation \eqref{IX-T} imply that
 \begin{eqnarray}\label{IX-pre}
 I_{\widehat{P}^{t}_{j}}=I_{P^{t}_{j}}\ \text{and}\  X_{\widehat{P}^{t}_{j}}=X_{P^{t}_{j}}\ \text{for any} \ j\in T_{t}.
\end{eqnarray}
That is, when we consider essentially the same $P$ as a polymatroid over a ground set with an extra element, $\mathscr{T}_{P}$ changes but $I_{P}$ and $X_{P}$ do not change. We now present a deletion-contraction formula for the case $\beta_{t}-\alpha_{t}>0$ which generalizes Theorem \ref{deletion-contraction-1} (ii).

\begin{theorem}\label{deletion-contraction-J}
Let $P$ be a polymatroid on $[n]$. For some $t\in [n]$, if $\beta_{t}-\alpha_{t}>0$, then $$\mathscr{T}_{P}(x,y)=x\mathscr{T}_{\widehat{P}^{t}_{\alpha_{t}}}(x,y)+y\mathscr{T}_{\widehat{P}^{t}_{\beta_{t}}}(x,y)+\sum_{j\in T_{t}\setminus \{\alpha_{t}, \beta_{t}\}}\mathscr{T}_{\widehat{P}^{t}_{j}}(x,y).$$
\end{theorem}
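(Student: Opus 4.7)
The plan is to use the $S_{n}$-invariance of $\mathscr{T}_{P}$ (Theorem \ref{invariants}(2)) to reduce to the case $t=n$, then to decompose the defining sum along the partition $P=\bigsqcup_{j\in T_{n}}P^{n}_{j}$ and to show that the activity contribution of each index can be read off slice by slice.

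After renaming so that $t=n$, the key step is the following activity-preservation claim: for every $j\in T_{n}$, every $\textbf{a}\in P^{n}_{j}$ with projection $\textbf{a}'\in \widehat{P}^{n}_{j}$, and every $i\in[n-1]$,
$$i\in \mathrm{Int}_{P}(\textbf{a})\iff i\in \mathrm{Int}_{\widehat{P}^{n}_{j}}(\textbf{a}'),\qquad i\in \mathrm{Ext}_{P}(\textbf{a})\iff i\in \mathrm{Ext}_{\widehat{P}^{n}_{j}}(\textbf{a}').$$
This follows essentially from the definitions: the internal (resp.\ external) activity of $i<n$ only tests swaps $\textbf{a}\mp\textbf{e}_{i}\pm\textbf{e}_{k}$ with $k<i<n$, which leave the $n$-th coordinate unchanged at $j$; hence such a vector lies in $P$ if and only if its projection lies in $\widehat{P}^{n}_{j}$. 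This is exactly where the reduction $t=n$ is essential, because for any other choice of $t$ one would additionally have to consider swaps $\textbf{e}_{i}\leftrightarrow\textbf{e}_{t}$, which jump between different slices.

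Next I would determine the activity of $n$ itself. Applying Theorem \ref{ia} with the only possible choice $I=\{n\}$ (since $n$ is the largest index), one sees that $n\in \mathrm{Int}_{P}(\textbf{a})$ iff $[n]\setminus\{n\}\in \mathcal{I}(\textbf{a})$, which in turn holds iff $a_{n}=f([n])-f([n]\setminus\{n\})=\alpha_{n}$; likewise $n\in \mathrm{Ext}_{P}(\textbf{a})$ iff $\{n\}\in \mathcal{I}(\textbf{a})$, i.e.\ iff $a_{n}=f(\{n\})=\beta_{n}$. Consequently, since we assume $\beta_{n}>\alpha_{n}$, the factor contributed by the index $n$ to the monomial $x^{oi(\textbf{a})}y^{oe(\textbf{a})}(x+y-1)^{ie(\textbf{a})}$ equals $x$ when $j=\alpha_{n}$, equals $y$ when $j=\beta_{n}$, and equals $1$ whenever $\alpha_{n}<j<\beta_{n}$ (in the last case $n$ is neither internally nor externally active).

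Putting the pieces together, each basis $\textbf{a}\in P^{n}_{j}$ contributes (the factor for $n$) times the monomial associated to $\textbf{a}'$ in $\mathscr{T}_{\widehat{P}^{n}_{j}}(x,y)$; summing first over $\textbf{a}'\in\widehat{P}^{n}_{j}$ and then over $j\in T_{n}$ produces exactly the stated formula. The only substantive ingredient is the activity-preservation claim, and the only genuine subtlety is the symmetry-breaking choice that places $t$ at the maximum of the order; without it the preceding bijection of swaps would fail, and none of the remaining steps would go through cleanly.
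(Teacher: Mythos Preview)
Your proposal is correct and follows essentially the same approach as the paper's proof: reduce to $t=n$ by $S_{n}$-invariance, partition $P$ into the slices $P^{n}_{j}$, show that the activities of indices $i<n$ are preserved slice by slice, and determine the activity of $n$ directly via Theorem~\ref{ia}. The only difference is cosmetic: the paper compares activities in $P$ with activities in the \emph{unprojected} slice $P^{n}_{j}$ (where $n$ is always both internally and externally active, contributing a factor $x+y-1$) and then invokes Theorem~\ref{deletion-contraction-1}(1) to divide out that factor and pass to $\widehat{P}^{n}_{j}$, whereas you work directly with the projection $\widehat{P}^{n}_{j}$ and thereby bypass that final step.
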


\begin{proof}
 By the $S_{n}$-invariance of $\mathscr{T}_{P}(x,y)$, we may let $t=n$. Then we have the following two claims.

\textbf{Claim 1.} If $\beta_{t}-\alpha_{t}>0$, then
\begin{enumerate}
\item [(i)] $t\in \mathrm{Int}_{P}(\mathbf{a})\setminus \mathrm{Ext}_{P}(\mathbf{a})$ for any $\mathbf{a}\in P^{t}_{\alpha_{t}}$;
\item [(ii)] $t\in \mathrm{Ext}_{P}(\mathbf{a})\setminus \mathrm{Int}_{P}(\mathbf{a})$ for any $\mathbf{a}\in P^{t}_{\beta_{t}}$;
\item [(iii)] $t\in [n]\setminus (\mathrm{Ext}_{P}(\mathbf{a}) \cup \mathrm{Int}_{P}(\mathbf{a}))$ for any $\mathbf{a}\in P^{t}_{j}$, where $j\in T_{t}\setminus \{\alpha_{t}, \beta_{t}\}$;
\item [(iv)] $t\in \mathrm{Ext}_{P^{t}_{j}}(\mathbf{a})\cap \mathrm{Int}_{P^{t}_{j}}(\mathbf{a})$ for any $\mathbf{a}\in P^{t}_{j}$,  where $j\in T_{t}$.
\end{enumerate}

\emph{Proof of Claim 1.} For any $\mathbf{a}\in P^{t}_{\alpha_{t}}$, we have $[n]\setminus \{t\}\in \mathcal{I}_{P}(\mathbf{a})$ by the definition of $P^{t}_{\alpha_{t}}$. Then $t\in \mathrm{Int}_{P}(\mathbf{a})$ by Theorem \ref{ia} (i). If $\mathbf{a}\in P^{t}_{\beta_{t}}$, then $t\in \mathrm{Ext}_{P}(\mathbf{a})$  by Theorem \ref{ia} (ii) as $\{t\}\in \mathcal{I}_{P}(\mathbf{a})$.
Conversely, by Lemma \ref{change} and $t=n$ we have $t\notin \mathrm{Int}_{P}(\mathbf{a})$ for $\mathbf a\notin P^{t}_{\alpha_{t}}$ and $t\notin \mathrm{Ext}_{P}(\mathbf{a})$ for $\mathbf a\notin P^{t}_{\beta_{t}}$ because the tightness of $[n]\setminus\{t\}$ and $\{t\}$, respectively, for $\mathbf a$ fails in these cases.
From this, claims (i), (ii), and (iii) all follow.
As to (iv), it is clear that for any $j\in T_{t}$, we have $t\in \mathrm{Int}_{P^{t}_{j}}(\mathbf{a})\cap \mathrm{Ext}_{P^{t}_{j}}(\mathbf{a})$ for any $\mathbf{a}\in P^{t}_{j}$, since $a_{t}=j$, that is, $[n]\setminus \{t\} \in \mathcal{I}_{P^{t}_{j}}(\mathbf{a})$ and $\{t\}\in \mathcal{I}_{P^{t}_{j}}(\mathbf{a})$.

\textbf{Claim 2.} For any $j\in T_{t}$, let $\mathbf{a}\in P^{t}_{j}$. Then for any $k\in [n]\setminus\{t\}$, we have that $k\in \mathrm{Int}_{P}(\mathbf{a})$ if and only if $k\in \mathrm{Int}_{P^{t}_{j}}(\mathbf{a})$, and $k\in \mathrm{Ext}_{P}(\mathbf{a})$ if and only if $k\in \mathrm{Ext}_{P^{t}_{j}}(\mathbf{a})$.

\emph{Proof of Claim 2.} It is easy to see that $\mathrm{Int}_{P}(\mathbf{a})\subseteq \mathrm{Int}_{P^{t}_{j}}(\mathbf{a})$ and $\mathrm{Ext}_{P}(\mathbf{a})\subseteq \mathrm{Ext}_{P^{t}_{j}}(\mathbf{a})$ as $t=n$ and $P^{t}_{j}\subseteq P$.

If $k\notin \mathrm{Int}_{P}(\mathbf{a})$, then there is $k'<k$ such that $\mathbf{b}=\mathbf{a}-\mathbf{e}_{k}+\mathbf{e}_{k'}\in P$. Note that $t=n$. Then $b_{t}=a_{t}=j$, that is, $\mathbf{b}\in P^{t}_{j}$. This implies that $k\notin \mathrm{Int}_{P^{t}_{j}}(\mathbf{a})$. Similarly, if $k\notin \mathrm{Ext}_{P}(\mathbf{a})$, then there is  $k''<k$ such that $\mathbf{c}=\mathbf{a}+\mathbf{e}_{k}-\mathbf{e}_{k''}\in P$. Hence, $k\notin \mathrm{Ext}_{P^{t}_{j}}(\mathbf{a})$ because $\mathbf{c}\in P^{t}_{j}$. Thus, the claim holds.

We know that for any basis $\mathbf{a}\in P$, there is an integer $j\in T_{t}$ such that $\mathbf{a}\in P^{t}_{j}$. Then by Claims 1 and 2,
\small{\begin{eqnarray*}
\mathscr{T}_{P}(x,y) &=&\sum_{\mathbf{a}\in P}x^{oi_{P}(\mathbf{a})}y^{oe_{P}(\mathbf{a})}(x+y-1)^{ie_{P}(\mathbf{a})}\\
&=&\sum_{\mathbf{a}\in P^{t}_{\alpha_{t}}}x^{oi_{P}(\mathbf{a})}y^{oe_{P}(\mathbf{a})}(x+y-1)^{ie_{P}(\mathbf{a})}+\sum_{\mathbf{a}\in P^{t}_{\beta_{t}}}x^{oi_{P}(\mathbf{a})}y^{oe_{P}(\mathbf{a})}(x+y-1)^{ie_{P}(\mathbf{a})}\\
&&+\sum_{j\in T_{t}\setminus \{\alpha_{t}, \beta_{t}\}}\left[\sum_{\mathbf{a}\in P^{t}_{j}}x^{oi_{P}(\mathbf{a})}y^{oe_{P}(\mathbf{a})}(x+y-1)^{ie_{P}(\mathbf{a})}\right]\\
&=&\sum_{\mathbf{a}\in P^{t}_{\alpha_{t}}}x^{oi_{P^{t}_{\alpha_{t}}}(\mathbf{a})+1}y^{oe_{P^{t}_{\alpha_{t}}}(\mathbf{a})}(x+y-1)^{ie_{P^{t}_{\alpha_{t}}}(\mathbf{a})-1}\\
&&+\sum_{\mathbf{a}\in P^{t}_{\beta_{t}}}x^{oi_{P^{t}_{\beta_{t}}}(\mathbf{a})}y^{oe_{P^{t}_{\beta_{t}}}(\mathbf{a})+1}(x+y-1)^{ie_{P^{t}_{\beta_{t}}}(\mathbf{a})-1}\\
&&+\sum_{j\in T_{t}\setminus \{\alpha_{t}, \beta_{t}\}}\left[ \sum_{\mathbf{a}\in P^{t}_{j}}x^{oi_{P^{t}_{j}}(\mathbf{a})}y^{oe_{P^{t}_{j}}(\mathbf{a})}(x+y-1)^{ie_{P^{t}_{j}}(\mathbf{a})-1}\right]\\
&=& (x+y-1)^{-1}\left[ x\mathscr{T}_{P^{t}_{\alpha_{t}}}(x,y)+y\mathscr{T}_{P^{t}_{\beta_{t}}}(x,y)+\sum_{j\in T_{t}\setminus \{\alpha_{t}, \beta_{t}\}}\mathscr{T}_{P^{t}_{j}}(x,y)\right].
\end{eqnarray*}}

Hence, $\mathscr{T}_{P}(x,y)=x\mathscr{T}_{\widehat{P}^{t}_{\alpha_{t}}}(x,y)+y\mathscr{T}_{\widehat{P}^{t}_{\beta_{t}}}(x,y)+\sum\limits_{j\in T_{t}\setminus \{\alpha_{t}, \beta_{t}\}}\mathscr{T}_{\widehat{P}^{t}_{j}}(x,y)$ since $\mathscr{T}_{\widehat{P}^{t}_{j}}(x,y)=(x+y-1)^{-1}\mathscr{T}_{P^{t}_{j}}(x,y)$ for any $j\in T_{t}$ by Theorem \ref{deletion-contraction-1} (i).
\end{proof}

Note that if $P$ is a hypergraphical polymatroid, then both $\widehat{P}^{t}_{\alpha_{t}}$ and $\widehat{P}^{t}_{\beta_{t}}$ are hypergraphical. However,  $\widehat{P}^{t}_{j}$ is not necessarily hypergraphical for $j\in T_{t}\setminus \{\alpha_{t}, \beta_{t}\}$.

A deletion-contraction formula of interior  and exterior polynomials of polymatroids can be obtained from Theorems \ref{deletion-contraction-1} (i) and \ref{deletion-contraction-J}.

\begin{corollary} \label{deletion-contraction-IX}
Let $P$ be a polymatroid on $[n]$. Then for any $t\in [n]$, we have
\begin{enumerate}
\item [(i)]  $I_{P}(x)=I_{\widehat{P}^{t}_{\alpha_{t}}}(x)+x\sum_{j\in T_{t}\setminus \{\alpha_{t}\}}I_{\widehat{P}^{t}_{j}}(x)$;
\item[(ii)]  $X_{P}(y)=X_{\widehat{P}^{t}_{\beta_{t}}}(y)+y\sum_{j\in T_{t}\setminus \{\beta_{t}\}}X_{\widehat{P}^{t}_{j}}(y)$.
\end{enumerate}
\end{corollary}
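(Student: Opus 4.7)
The plan is to derive both identities as direct substitutions into the deletion-contraction formulas for $\mathscr{T}_P(x,y)$. Concretely, I would invoke the relations $I_P(x)=x^n\mathscr{T}_P(x^{-1},1)$ and $X_P(y)=y^n\mathscr{T}_P(1,y^{-1})$ from \eqref{IX-T}, together with the parallel identities $I_{\widehat{P}^t_j}(x)=x^{n-1}\mathscr{T}_{\widehat{P}^t_j}(x^{-1},1)$ and $X_{\widehat{P}^t_j}(y)=y^{n-1}\mathscr{T}_{\widehat{P}^t_j}(1,y^{-1})$, which hold because each $\widehat{P}^t_j$ is a polymatroid on the $(n-1)$-element ground set $[n]\setminus\{t\}$.

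The argument splits according to whether $\beta_t-\alpha_t$ is zero or positive. In the degenerate case $\alpha_t=\beta_t$, the set $T_t\setminus\{\alpha_t\}$ (resp.\ $T_t\setminus\{\beta_t\}$) is empty, so I only need to verify $I_P=I_{\widehat{P}^t_{\alpha_t}}$ and $X_P=X_{\widehat{P}^t_{\beta_t}}$. These fall out of Theorem~\ref{deletion-contraction-1}(1), since the prefactor $(x+y-1)$ evaluates to $x^{-1}$ under $y=1$, $x\mapsto x^{-1}$ (respectively to $y^{-1}$ under $x=1$, $y\mapsto y^{-1}$), absorbing exactly one of the $n$ outer factors and leaving an interior/exterior polynomial on the contracted ground set.

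In the main case $\beta_t-\alpha_t>0$, I would substitute the formula of Theorem~\ref{deletion-contraction-J} into $I_P(x)=x^n\mathscr{T}_P(x^{-1},1)$ and $X_P(y)=y^n\mathscr{T}_P(1,y^{-1})$, and redistribute the outer $x^n$ or $y^n$ across the three types of summands (coefficient $x$, coefficient $y$, and coefficient $1$ from the middle sum). The essential bookkeeping is that multiplying by $x^n$ converts each $\mathscr{T}_{\widehat{P}^t_j}(x^{-1},1)$ into $x\cdot I_{\widehat{P}^t_j}(x)$; the summand whose coefficient in Theorem~\ref{deletion-contraction-J} is $x$ then meets $x^{-1}$ from the evaluation and produces a bare $I_{\widehat{P}^t_{\alpha_t}}(x)$ without the leading $x$, while every other summand retains the prefactor $x$. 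The $y\mapsto y^{-1}$ computation for $X_P$ is symmetric and singles out $X_{\widehat{P}^t_{\beta_t}}$ as the lone summand without a prefactor $y$. Combining gives the two claimed formulas.

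I do not anticipate any substantive obstacle: the proof is pure algebraic manipulation, and the only place requiring care is tracking which of the deletion-contraction summands loses its outer factor when the prefactor $x$ or $y$ meets the evaluation at $x^{-1}$ or $y^{-1}$.
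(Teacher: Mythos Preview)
Your proposal is correct and follows essentially the same approach as the paper: both split into the cases $\beta_t=\alpha_t$ and $\beta_t>\alpha_t$, invoke Theorem~\ref{deletion-contraction-1}(1) and Theorem~\ref{deletion-contraction-J} respectively, and then substitute $(x,y)\mapsto(x^{-1},1)$ (resp.\ $(1,y^{-1})$) into the resulting identity, using $I_{\widehat{P}^t_j}(x)=x^{n-1}\mathscr{T}_{\widehat{P}^t_j}(x^{-1},1)$ to convert each term. Your bookkeeping description of which summand loses its prefactor is exactly the computation the paper writes out explicitly.
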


\section{Monotonicity properties}
\label{sec4}

In this section, we prove two monotonicity properties of interior and exterior polynomials of polymatroids. For two polynomials $g_{1}$ and $g_{2}$, we say $g_{1}\leq g_{2}$ if each coefficient of $g_{1}$ is less than or equal to the corresponding coefficient of $g_{2}$.

We first consider the case when there is an inclusion of the two polymatroids.

\begin{theorem}\label{subset-monotonicity}
Let $P$ and $P'$ be two polymatroids on the same ground set $[n]$. If $P'\subseteq P$, then $I_{P'}\leq I_{P}$ and $X_{P'}\leq X_{P}$.
\end{theorem}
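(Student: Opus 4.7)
I would induct on $n$. The base case $n=1$ is immediate: $P' \subseteq P$ forces $P' = P$ (each is a single point with coordinate fixed at $f([1])$), so $I_{P'} = I_P = 1$. For the inductive step, fix any $t \in [n]$. Adopting the convention that an empty slice contributes $0$ to the interior polynomial, the containment $\widehat{P'}^t_j \subseteq \widehat{P}^t_j$ holds on $[n]\setminus\{t\}$ for every $j \in \mathbb{Z}$, so the inductive hypothesis yields $I_{\widehat{P'}^t_j} \leq I_{\widehat{P}^t_j}$ coefficient-wise.

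If some $t$ satisfies the alignment $\alpha_t(P) = \alpha_t(P')$, expanding both $I_P$ and $I_{P'}$ via Corollary \ref{deletion-contraction-IX} and subtracting gives
\[
I_P - I_{P'} \;=\; \bigl(I_{\widehat{P}^t_{\alpha_t(P)}} - I_{\widehat{P'}^t_{\alpha_t(P)}}\bigr) \;+\; x\sum_{j>\alpha_t(P)}\bigl(I_{\widehat{P}^t_j} - I_{\widehat{P'}^t_j}\bigr),
\]
a sum of coefficient-wise non-negative terms. Otherwise $\alpha_t(P) < \alpha_t(P')$ for every $t$; I would interpolate through the truncation $P^* := \{\textbf{a}\in P : a_t \geq \alpha_t(P')\}$ for some chosen index $t$. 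By the Exchange Axiom this $P^*$ is a polymatroid (an exchange dropping $a_t$ below the new minimum is impossible, since in any such exchange one of the participating bases already sits at that minimum), it satisfies $P' \subseteq P^* \subsetneq P$, and $\alpha_t(P^*) = \alpha_t(P')$. Iterating, if necessary, for other indices reduces the general inequality to the aligned case, contingent on the auxiliary truncation inequality $I_{P^*} \leq I_P$.

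The truncation inequality is the crux. Corollary \ref{deletion-contraction-IX} rewrites it as the coefficient-wise non-negativity of
\[
I_{\widehat{P}^t_{\alpha_t(P)}} \;+\; x\!\!\!\sum_{\alpha_t(P)<j<\alpha_t(P')}\!\!\!I_{\widehat{P}^t_j} \;+\; (x-1)\,I_{\widehat{P}^t_{\alpha_t(P')}}.
\]
The $(x-1)$-factor is the delicate point, as the slice polynomials $I_{\widehat{P}^t_j}$ are not monotone in $j$ in general and a slice-by-slice comparison fails. I would attack this by a combinatorial pairing: for each degree $k$, construct an injection from the bases of $\widehat{P}^t_{\alpha_t(P')}$ having $k$ internally inactive indices into the disjoint union of the bases of $\widehat{P}^t_{\alpha_t(P)}$ with $k$ internally inactive indices and the bases of $\widehat{P}^t_{\alpha_t(P')}$ with $k-1$ such indices. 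Such an injection is naturally produced by the lift $\textbf{a}\mapsto \textbf{a}+\textbf{e}_{k(\textbf{a})}$ of Lemma \ref{change} between consecutive slices, with $k(\textbf{a})$ selected greedily so that internal activity is altered by at most one. Carrying out this pairing cleanly is the principal obstacle.

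Finally, the exterior bound $X_{P'} \leq X_P$ follows symmetrically (replace $\alpha_t$ by $\beta_t$, lower truncations by upper ones, and the $I$-half of Corollary \ref{deletion-contraction-IX} by its $X$-half), or immediately from the interior case applied to $-P' \subseteq -P$ together with the identity $I_{-P} = X_P$ of \eqref{duality}.
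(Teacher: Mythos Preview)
Your overall architecture matches the paper's: reduce one polynomial to the other by duality, induct on $n$, handle the ``aligned'' case (where one coordinate has matching extremal value in $P$ and $P'$) by expanding both sides via Corollary~\ref{deletion-contraction-IX}, and bridge the non-aligned case through an intermediate truncation. The paper does exactly this, working with $X$ and $\beta_t$ where you work with $I$ and $\alpha_t$.

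The gap is your truncation inequality $I_{P^*}\le I_P$. You correctly identify it as the crux and correctly observe that the $(x-1)$ term blocks a naive slice comparison, but the injection you sketch is left as ``the principal obstacle'' and is not carried out. The paper bypasses this entirely with a one-line trick you have overlooked: use the $S_n$-invariance of $I_P$ to place $t$ at the \emph{smallest} index, $t=1$. Then for every $\textbf{b}\in P^*$ one has $\mathrm{Int}_P(\textbf{b})=\mathrm{Int}_{P^*}(\textbf{b})$. Indeed $\mathrm{Int}_P(\textbf{b})\subset\mathrm{Int}_{P^*}(\textbf{b})$ is automatic from $P^*\subset P$, and conversely if $i\notin\mathrm{Int}_P(\textbf{b})$ then some witness $\textbf{c}=\textbf{b}-\textbf{e}_i+\textbf{e}_{i'}\in P$ with $i'<i$ exists; since $i'\ge 1=t$ forces $i\neq t$, the $t$-coordinate of $\textbf{c}$ satisfies $c_t\ge b_t\ge\alpha_t(P')$, so $\textbf{c}\in P^*$ and $i\notin\mathrm{Int}_{P^*}(\textbf{b})$. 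Hence
\[
I_P(x)=\sum_{\textbf{b}\in P^*}x^{\overline{\iota}_{P^*}(\textbf{b})}+\sum_{\textbf{b}\in P\setminus P^*}x^{\overline{\iota}_P(\textbf{b})}=I_{P^*}(x)+\text{(nonnegative)},
\]
which is the inequality you need. No pairing or injection is required; the whole difficulty dissolves once $t$ sits at the extreme position so that the exchange defining inactivity can never push $a_t$ in the wrong direction. (The paper also remarks that, alternatively, the truncation $P^*$ is automatically aligned with $P$ at every index $s\neq t$, so one could re-invoke the aligned case there; but the direct argument above is what is written out.)
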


\begin{proof}
It is enough to prove the statement for the exterior polynomial because of \eqref{duality} and because $P'\subseteq P$ implies $-P'\subseteq -P$. We now prove the claim by induction on $n$.

 Note that the constant term  of the exterior  polynomial of any polymatroid is 1. Then the statement is clear in the case $n=1$. Assume that the conclusion holds for $n\leq m-1$. Let $f$ and $f'$ be the rank functions of $P$ and $P'$, respectively.  Then for $n=m$, we divide the proof into two cases.

\textbf{Case 1.} Assume that there is some $t\in [n]$ such that $f'(\{t\})=f(\{t\})$. Let  $T_{t}=\{f([n])-f([n]\setminus\{t\}),\ldots,f(\{t\})\}$ (same as the definition of $T_{t}$ in Section 3) and $T'_{t}=\{f'([n])-f'([n]\setminus\{t\}),\ldots,f'(\{t\})\}$. Note that $f'([n])=f([n])$ and by \eqref{RF}, we  have $f'(I)\leq f(I)$ for all $I\subseteq [n]$ as $P'\subseteq P$. Then  $T'_{t}\subseteq T_{t}$ and $\widehat{P}'^{t}_{j}\subseteq \widehat{P}^{t}_{j}$ (cf. \eqref{projection}) for any $j\in T'_{t}$.
By the induction hypothesis and Corollary \ref{deletion-contraction-IX}, we know that
\begin{eqnarray*}
X_{P}(y)&=&X_{\widehat{P}^{t}_{f(\{t\})}}(y)+y\sum_{j\in T_{t}\setminus \{f(\{t\})\}}X_{\widehat{P}^{t}_{j}}(y)\\
&=&X_{\widehat{P}^{t}_{f(\{t\})}}(y)+y\sum_{j\in T_{t}'\setminus \{f(\{t\})\}}X_{\widehat{P}^{t}_{j}}(y)+y\sum_{j\in T_{t}\setminus T'_{t}}X_{\widehat{P}^{t}_{j}}(y)\\
&\geq&X_{\widehat{P}'^{t}_{f'(\{t\})}}(y)+y\sum_{j\in T_{t}'\setminus \{f'(\{t\})\}}X_{\widehat{P}'^{t}_{j}}(y)+y\sum_{j\in T_{t}\setminus T'_{t}}X_{\widehat{P}^{t}_{j}}(y)\\
&=&X_{P'}(y)+y\sum_{j\in T_{t}\setminus T'_{t}}X_{\widehat{P}^{t}_{j}}(y)\\
&\geq& X_{P'}(y).
\end{eqnarray*}

\textbf{Case 2.} Assume that $f'(\{t'\})<f(\{t'\})$ for all $t'\in [n]$. Then choose an arbitrary index $t\in [n]$. By the Exchange Axiom, the set $P''=\{\mathbf{a}\in P\mid a_{t}\leq f'(\{t\})\}$ is also a polymatroid.  Case 1 implies that $X_{P'}\leq X_{P''}$. We now prove that $X_{P''}\leq X_{P}$. This in fact follows from Case 1 applied to any coordinate other than $t$, but we will follow a different route.

By the $S_{n}$-invariance of the exterior polynomial, we may assume that $t=1$. Then
 \begin{eqnarray}\label{EX-M}
\mathrm{Ext}_{P}(\mathbf{b})=\mathrm{Ext}_{P''}(\mathbf{b}) \ \text{for any} \ \mathbf{b}\in P''.
\end{eqnarray}
\begin{enumerate}
\item [(i)] It is obvious that $\mathrm{Ext}_{P}(\mathbf{b})\subseteq \mathrm{Ext}_{P''}(\mathbf{b})$ as $P''\subseteq P$.
\item [(ii)] If $i\notin \mathrm{Ext}_{P}(\mathbf{b})$, then by definition, there exists some $i'\in [n]$ with $i'<i$ such that $\mathbf{c}=\mathbf{b}-\mathbf{e}_{i'}+\mathbf{e}_{i}\in P$. Note that $\mathbf{c}\in P''$ as $c_{t}\leq b_{t}\leq f'(\{t\})$. Hence, $i\notin \mathrm{Ext}_{P''}(\mathbf{b})$.
\end{enumerate}
The equation \eqref{EX-M} implies that $\overline{\epsilon}_{P}(\mathbf{b})=\overline{\epsilon}_{P''}(\mathbf{b})$ for any $\mathbf{b}\in P''$.
Then
 \begin{eqnarray*}
X_{P}(y)&=&\sum_{\mathbf{b}\in P} y^{\overline{\epsilon}_{P}(\mathbf{b})}\\
&=&\sum_{\mathbf{b}\in P''} y^{\overline{\epsilon}_{P}(\mathbf{b})}+\sum_{\mathbf{b}\in P\setminus P''} y^{\overline{\epsilon}_{P}(\mathbf{b})}\\
&=&\sum_{\mathbf{b}\in P''} y^{\overline{\epsilon}_{P''}(\mathbf{b})}+\sum_{\mathbf{b}\in P\setminus P''} y^{\overline{\epsilon}_{P}(\mathbf{b})}\\
&=&X_{P''}(y)+\sum_{\mathbf{b}\in P\setminus P''} y^{\overline{\epsilon}_{P}(\mathbf{b})}\\
&\geq&X_{P''}(y).
\end{eqnarray*}
This completes the proof.
\end{proof}

We next consider the case of minors. We start with
recalling
an easy result (leaving the proof to the reader since it is both classical and trivial to prove).

\begin{proposition}\label{well-define1}
Let $P$ be a polymatroid on $[n]$. Then, for any disjoint subsets $A$ and $B$ of $[n]$, we have $P/ A / B=P/ B / A=P/ (A\cup B)$ and $P\setminus A \setminus B=P\setminus B \setminus A=P\setminus (A\cup B)$, as well as $P\setminus A/B=P/B\setminus A$.
\end{proposition}

\begin{definition}
Let $P$ be a polymatroid on $[n]$.
A polymatroid $P'$ is a \emph{minor} of $P$ if $P'=(P\setminus A)/B=(P/B)\setminus A$ for some disjoint subsets $A$ and $B$ of $[n]$.
\end{definition}

For a polymatroid $P$, let $f_{P}$ and $f_{-P}$ be the rank functions of the polymatroids $P$ and $-P$, respectively. Then by \eqref{RF} and the definition of the dual polymatroid, we have that $f_{-P}(T)=f_{P}([n]\setminus T)-f_{P}([n])$ for any subset $T\subseteq [n]$.
The following relation of duality with contraction and deletion holds.

\begin{proposition}\label{well-dual}
Let $P$ be a polymatroid on $[n]$. Then $-(P\setminus A)=(-P)/A$ and $-(P/ A)= (-P)\setminus A$ for any subset $A\subseteq [n]$.
\end{proposition}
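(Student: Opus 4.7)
The plan is to verify each identity by comparing rank functions, using the formulas recalled just before the statement. Both sides of each identity are polymatroids on $[n]\setminus A$, and a polymatroid is determined by its rank function, so it suffices to check that the rank functions agree on every $T\subset [n]\setminus A$.

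First I would handle $-(P\setminus A)=(-P)/A$. On the left, applying the duality formula on the ground set $[n]\setminus A$ gives
\[
f_{-(P\setminus A)}(T)=f_{P\setminus A}\bigl(([n]\setminus A)\setminus T\bigr)-f_{P\setminus A}([n]\setminus A)=f_{P}\bigl(([n]\setminus A)\setminus T\bigr)-f_{P}([n]\setminus A),
\]
where I used $f_{P\setminus A}=f_P$ restricted to subsets of $[n]\setminus A$. On the right, combining the contraction formula with the duality formula yields
\[
f_{(-P)/A}(T)=f_{-P}(T\cup A)-f_{-P}(A)=\bigl(f_P([n]\setminus(T\cup A))-f_P([n])\bigr)-\bigl(f_P([n]\setminus A)-f_P([n])\bigr).
\]
Since $T\subset [n]\setminus A$, we have $[n]\setminus(T\cup A)=([n]\setminus A)\setminus T$, and the two expressions coincide.

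Next I would treat $-(P/A)=(-P)\setminus A$ in the same style. On the one hand,
\[
f_{-(P/A)}(T)=f_{P/A}\bigl(([n]\setminus A)\setminus T\bigr)-f_{P/A}([n]\setminus A)=\bigl(f_P([n]\setminus T)-f_P(A)\bigr)-\bigl(f_P([n])-f_P(A)\bigr),
\]
which simplifies to $f_P([n]\setminus T)-f_P([n])$. On the other hand, $f_{(-P)\setminus A}(T)=f_{-P}(T)=f_P([n]\setminus T)-f_P([n])$, matching the left-hand side. Alternatively, once the first identity is established, the second can be deduced formally by applying the first to $-P$ in place of $P$ and invoking $-(-P)=P$ from \eqref{dual}.

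There is no real obstacle here; the only point requiring care is bookkeeping of the ambient ground set, since the duality formula on $[n]\setminus A$ uses complementation inside $[n]\setminus A$, whereas the duality formula on $[n]$ uses complementation inside $[n]$. The identity $[n]\setminus(T\cup A)=([n]\setminus A)\setminus T$ for $T\subset [n]\setminus A$ is what makes the two viewpoints compatible, and once this is noted the calculations proceed automatically.
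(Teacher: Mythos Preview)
Your proof is correct and follows essentially the same approach as the paper: both compare rank functions, and the computation for $-(P\setminus A)=(-P)/A$ is identical. The paper derives the second identity only by applying the first to $-P$ and invoking \eqref{dual}, whereas you give a direct rank-function computation as well (and mention this alternative), which is a harmless extra.
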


\begin{proof}
Let $f_{P}$, $f_{P\setminus A}$, $f_{-(P\setminus A)}$, $f_{(-P)/A}$ and $f_{-P}$ be the rank functions of the polymatroids $P$, $P\setminus A$, $-(P\setminus A)$, $(-P)/A$ and $-P$, respectively. Then for any subset $T\subseteq [n]\setminus A$, we have
\begin{eqnarray*}
f_{-(P\setminus A)}(T)&=&f_{P\setminus A}([n]\setminus A\setminus T)-f_{P\setminus A}([n]\setminus A)\\
&=&f_{P}([n]\setminus A\setminus T)-f_{P}([n]\setminus A)\\
&=&(f_{P}([n]\setminus (A\cup T))-f_{P}([n]))-(f_{P}([n]\setminus A)-f_{P}([n]))\\
&=&f_{-P}(T\cup A)-f_{-P}(A)\\
&=&f_{(-P)/A}(T).
\end{eqnarray*}
Hence, $-(P\setminus A)=(-P)/A$. This implies that $-((-P)\setminus A)=P/A$. Therefore 
$-(P/ A)= (-P)\setminus A$ also holds.
\end{proof}

We study  monotonicity properties of deletion and contraction before our second main claim.

\begin{lemma}\label{mono}
Let $P$ be a polymatroid on $[n]$. Then $I_{P\setminus A}\leq I_{P}$,  $I_{P/A}\leq I_{P}$, $X_{P\setminus A}\leq X_{P}$ and $X_{P/A}\leq X_{P}$ for any subset $A\subseteq [n]$.
\end{lemma}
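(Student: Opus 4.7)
The plan is induction on $|A|$, after which almost all the work is already done by results proved earlier. The base case $|A|=0$ is a tautology. For the inductive step I write $A = A' \cup \{t\}$ with $t \notin A'$, use the composition identities $P\setminus A = (P\setminus A')\setminus\{t\}$ and $P/A = (P/A')/\{t\}$ from the proposition just proved, and invoke the single-element statement applied to the polymatroid $P\setminus A'$ (respectively $P/A'$) on the smaller ground set $[n]\setminus A'$, followed by the inductive hypothesis for $P$ and $A'$. So the task reduces to proving the four inequalities for $A=\{t\}$, for a single $t\in [n]$.

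For the single-element case the cleanest route goes via Theorem \ref{subset-monotonicity}. By \eqref{Del-Con} we have $P\setminus\{t\} = \widehat{P}^t_{\alpha_t}$ and $P/\{t\} = \widehat{P}^t_{\beta_t}$, and since the interior and exterior polynomials are unaffected by a trivial extra coordinate, equation \eqref{IX-pre} gives
\begin{align*}
I_{P\setminus\{t\}} &= I_{P^t_{\alpha_t}}, & X_{P\setminus\{t\}} &= X_{P^t_{\alpha_t}},\\
I_{P/\{t\}} &= I_{P^t_{\beta_t}}, & X_{P/\{t\}} &= X_{P^t_{\beta_t}}.
\end{align*}
Both $P^t_{\alpha_t}$ and $P^t_{\beta_t}$ are polymatroids on the common ground set $[n]$ (as noted in Section 3 via the Exchange Axiom) and sit inside $P$ as literal subsets by construction. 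Hence Theorem \ref{subset-monotonicity} applied to each of the four identifications delivers all four desired inequalities in one stroke.

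I do not expect a serious obstacle; the lemma is essentially a corollary of subset monotonicity combined with the slicing description of deletion and contraction. The only bookkeeping point worth flagging is that $P\setminus\{t\}$ and $P/\{t\}$ live on the ground set $[n]\setminus\{t\}$ whereas the slices $P^t_{j}$ keep the ground set $[n]$, and it is precisely \eqref{IX-pre} that bridges these two viewpoints while preserving the polynomials $I$ and $X$. An alternative route would read $I_{P\setminus\{t\}}\leq I_P$ and $X_{P/\{t\}}\leq X_P$ straight off Corollary \ref{deletion-contraction-IX}, but that corollary does not directly give the cross-cases $I_{P/\{t\}}\leq I_P$ and $X_{P\setminus\{t\}}\leq X_P$, and duality via \eqref{duality} and Proposition \ref{well-dual} only pairs each easy inequality with its dual partner in the same row, so the subset-monotonicity approach above is the efficient one.
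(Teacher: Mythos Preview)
Your proof is correct and follows essentially the same route as the paper: reduce to a single element via the composition identities, identify $P\setminus\{t\}$ and $P/\{t\}$ with the slices $P^t_{\alpha_t}$ and $P^t_{\beta_t}$ through \eqref{Del-Con} and \eqref{IX-pre}, and then apply Theorem \ref{subset-monotonicity}. The only cosmetic difference is that the paper first uses duality (\eqref{duality} and Proposition \ref{well-dual}) to reduce the four inequalities to the two interior ones, whereas you simply let Theorem \ref{subset-monotonicity} handle all four at once; both are equally valid.
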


\begin{proof}
It is enough to prove that the statements $I_{P\setminus A}\leq I_{P}$ and  $I_{P/A}\leq I_{P}$ hold because of \eqref{duality} and Proposition \ref{well-dual}. By Proposition \ref{well-define1}, it suffices to consider the case that $A$ is a singleton set $\{t\}$. The claims $I_{P\setminus \{t\}}\leq I_{P}$ and $I_{P/\{t\}}\leq I_{P}$, by \eqref{Del-Con} and \eqref{IX-pre},
are equivalent to $I_{P^{t}_{\alpha_{t}}}\leq I_{P}$ and $I_{P^{t}_{\beta_{t}}}\leq I_{P}$, respectively. These follow from Theorem \ref{subset-monotonicity}.
\end{proof}

\begin{theorem}\label{minor-monotonicity}
If $P'$ is a minor of a polymatroid $P$,  then $I_{P'}\leq I_{P}$ and $X_{P'}\leq X_{P}$.
\end{theorem}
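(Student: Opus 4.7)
The plan is to reduce the claim to Lemma \ref{mono} by a two-step application. By Definition \ref{well-define}, any minor $P'$ of $P$ has the form $(P\setminus A)/B$ or $(P/B)\setminus A$ for disjoint $A,B\subset [n]$. So it suffices to compose the one-step monotonicity results already proven.

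More precisely, I would first treat the case $P'=(P\setminus A)/B$. Set $Q=P\setminus A$; this is a polymatroid on $[n]\setminus A$, and by Lemma \ref{mono} applied to $P$ we have $I_{Q}\leq I_{P}$ and $X_{Q}\leq X_{P}$. Now $P'=Q/B$, and applying Lemma \ref{mono} to $Q$ gives $I_{P'}\leq I_{Q}$ and $X_{P'}\leq X_{Q}$. Chaining the two inequalities yields $I_{P'}\leq I_{P}$ and $X_{P'}\leq X_{P}$. The case $P'=(P/B)\setminus A$ is entirely analogous: first contract to obtain $R=P/B$ with $I_{R}\leq I_{P}$ and $X_{R}\leq X_{P}$, then delete to get $P'=R\setminus A$ with $I_{P'}\leq I_{R}$ and $X_{P'}\leq X_{R}$.

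There is essentially no obstacle here, since Lemma \ref{mono} already gives monotonicity under a single deletion or contraction and since coefficientwise inequality $\leq$ is transitive. One small bookkeeping point worth noting is that deletion and contraction of a polymatroid $Q$ on a ground set $[n]\setminus A$ are legitimate operations producing polymatroids (by the submodularity of the induced rank functions shown earlier), so the second application of Lemma \ref{mono} is valid even though the ambient ground set has shrunk. No further calculation is required.
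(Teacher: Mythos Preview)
Your proposal is correct and follows essentially the same approach as the paper: apply Lemma \ref{mono} once for the deletion step and once for the contraction step, then chain the coefficientwise inequalities. The only cosmetic difference is that the paper observes parenthetically that $(P\setminus A)/B=(P/B)\setminus A$, so it treats a single case rather than the two you wrote out separately.
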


\begin{proof}
Without loss of generality, we may assume that $P$ is a polymatroid on $[n]$ and $P'=P\setminus A/B$ for disjoint subsets $A\subseteq [n]$ and $B\subseteq [n]$.  Then $I_{P'}\leq I_{P\setminus A}\leq I_{P}$ and $X_{P'}\leq X_{P\setminus A}\leq X_{P}$ by Lemma \ref{mono}.
\end{proof}

Now let us focus on the hypergraphical cases. For brevity, we replace $I_{P_{\mathcal{H}}}$ and $X_{P_{\mathcal{H}}}$ with $I_{\mathcal{H}}$ and $X_{\mathcal{H}}$ for a hypergraph $\mathcal{H}$.

\begin{corollary}\label{HG-monotonicity}
Let $\mathcal{H}=(V,E)$ and $\mathcal{H}'=(V',E')$ be two hypergraphs. Let $\mathrm{Bip} \mathcal{H}$ and $\mathrm{Bip} \mathcal{H}'$ be the associated bipartite graphs of $\mathcal{H}$ and $\mathcal{H}'$, respectively. If $\mathrm{Bip} \mathcal{H}'$ is a subgraph of $\mathrm{Bip} \mathcal{H}$, then $I_{\mathcal{H}'}\leq I_{\mathcal{H}}$.
If $E'\subseteq E$ and $V'\subseteq V$ hold as well, then $X_{\mathcal{H}'}\leq X_{\mathcal{H}}$.
\end{corollary}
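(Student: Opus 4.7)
My plan is to convert the subgraph inclusion $\mathrm{Bip} \mathcal{H}' \subseteq \mathrm{Bip} \mathcal{H}$ into a chain of polymatroid operations---deletion, translation, and pure inclusion---each of which is controlled by Theorems \ref{subset-monotonicity}, \ref{minor-monotonicity} (via Lemma \ref{mono}), or \ref{invariants}(1). First I would observe that adding isolated vertices to the $V$-side of $\mathrm{Bip} \mathcal{H}'$ does not change the rank function $\mu(I) = |\bigcup I| - c(I)$ (only vertices that lie in some hyperedge contribute), hence does not change $P_{\mathcal{H}'}$, $I_{\mathcal{H}'}$, or $X_{\mathcal{H}'}$, so we may assume $V' = V$. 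Then I would interpose the hypergraph $\mathcal{H}^* = (V, E')$ in which each $e' \in E'$ is re-inflated to its corresponding $e \in E$, giving $\mathrm{Bip} \mathcal{H}' \subseteq \mathrm{Bip} \mathcal{H}^* \subseteq \mathrm{Bip} \mathcal{H}$. Passing from $\mathcal{H}^*$ to $\mathcal{H}$ amounts to deleting the hyperedges in $E \setminus E'$ at the polymatroid level (the rank function $\mu$ restricts in the expected way), so Lemma \ref{mono} delivers $I_{\mathcal{H}^*} \leq I_{\mathcal{H}}$ and $X_{\mathcal{H}^*} \leq X_{\mathcal{H}}$.

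The remaining transition from $\mathcal{H}'$ to $\mathcal{H}^*$, in which certain hyperedges are shrunk, is handled by induction on the number of missing $\mathrm{Bip}$-edges. The atomic step is: fix one vertex $v_0$ and one hyperedge $e_0$, and write $\mathcal{H}_1, \mathcal{H}_2$ for the hypergraphs before and after adding $v_0$ to $e_0$, with rank functions $\mu_1$ and $\mu_2$. A short case analysis on whether $v_0$ already belongs to $\bigcup I$ in $\mathcal{H}_1$ shows that $\mu_2(I) - \mu_1(I) \in \{0,1\}$ for every $I \subseteq E'$, with value $1$ occurring precisely when $e_0 \in I$ and $v_0, e_0$ lie in distinct connected components of $\mathrm{Bip} \mathcal{H}_1|_I$. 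Two situations then arise. If $\mu_1(E') = \mu_2(E')$, then $v_0$ already sits in the component of $e_0$ in $\mathrm{Bip} \mathcal{H}_1$, the two polymatroids share a total rank, and the pointwise inequality $\mu_1 \leq \mu_2$ gives $P_{\mathcal{H}_1} \subseteq P_{\mathcal{H}_2}$, so Theorem \ref{subset-monotonicity} finishes the case. If instead $\mu_2(E') = \mu_1(E') + 1$, then $v_0$ and $e_0$ lie in distinct components of $\mathrm{Bip} \mathcal{H}_1$, hence also in every subgraph $\mathrm{Bip} \mathcal{H}_1|_I$, which forces $\mu_2(I) - \mu_1(I) = 1$ on \emph{every} $I \ni e_0$; this translates directly into $P_{\mathcal{H}_1} + \textbf{e}_{e_0} \subseteq P_{\mathcal{H}_2}$, after which translation invariance (Theorem \ref{invariants}(1)) combined with Theorem \ref{subset-monotonicity} again concludes.

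The main obstacle is the second case: because the total ranks of $P_{\mathcal{H}_1}$ and $P_{\mathcal{H}_2}$ disagree there, one cannot compare them as subsets directly, and the key is to identify $\textbf{e}_{e_0}$ as the correct translation to restore alignment. Confirming that the gain $\mu_2(I) - \mu_1(I)$ is uniformly $1$ on every $I \ni e_0$ (and not merely on some of them) is where the connected-components picture of $\mathrm{Bip} \mathcal{H}_1$ is indispensable, and it is exactly this feature that converts the subset and minor monotonicity theorems into the hypergraphical Corollary \ref{HG-monotonicity}.
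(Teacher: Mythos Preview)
Your proof is correct and uses the same toolkit as the paper (deletion, translation invariance, subset monotonicity), but the overall architecture differs in the second half. Both you and the paper begin by deleting the hyperedges in $E\setminus E'$ to reach an intermediate hypergraph on $(V,E')$ (your $\mathcal{H}^*$ is exactly the paper's $\mathcal{H}''$), invoking minor monotonicity. From there, however, the paper bridges $\mathcal{H}'$ and $\mathcal{H}''$ in one shot: it builds an auxiliary hypergraph $\mathcal{H}^{4}$ by attaching leaf edges and redirecting certain edges to new pendant vertices, engineered so that $\mu_{\mathcal{H}^{4}}\leq\mu_{\mathcal{H}''}$ with equality on $E'$ (giving $P_{\mathcal{H}^{4}}\subset P_{\mathcal{H}''}$), while $P_{\mathcal{H}^{4}}$ is simultaneously a translate of $P_{\mathcal{H}'}$. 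You instead induct on the number of missing $\mathrm{Bip}$-edges, analysing the atomic step of adding a single $v_0e_0$ and splitting into the two cases $\mu_2(E')=\mu_1(E')$ and $\mu_2(E')=\mu_1(E')+1$. Your observation that in the second case $v_0$ and $e_0$ lie in different components of the full graph, hence of every restriction, is the key insight that yields the uniform shift $P_{\mathcal{H}_1}+\textbf{e}_{e_0}\subset P_{\mathcal{H}_2}$; the verification via the defining inequalities is straightforward. Your route avoids the somewhat ad hoc construction of $\mathcal{H}^{4}$ and makes the mechanism more transparent, at the mild cost of an induction; the paper's route compresses everything into a single translation-plus-inclusion but requires the reader to check the combinatorics of $\mathcal{H}^{4}$ carefully.
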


We phrased the claim as above to underline that the interior polynomial is transpose-invariant \cite{Kalman3} (i.e., $I_{\mathcal{H}}$ depends only on $\mathrm{Bip} \mathcal{H}$), whereas the exterior polynomial is not.

\begin{proof}
\begin{figure}
\centering
\includegraphics[width=0.8\textwidth]{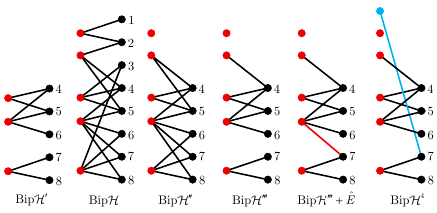}
\caption{An example  in Corollary \ref{HG-monotonicity}}
\label{monotonicity1}
\end{figure}
  See Figure \ref{monotonicity1} for an illustration. We may assume that $E'\subseteq E$ and $V'\subseteq V$.

Let $\mathrm{Bip} \mathcal{H}''$ be the bipartite graph obtained  from $\mathrm{Bip} \mathcal{H}$ by removing all vertices of $E\setminus E'$ and all edges incident with elements of $E\setminus E'$. Let $\mathcal{H}''$ be the hypergraph induced by $\mathrm{Bip} \mathcal{H}''$ with $E'$ as the set of hyperedges. Then $I_{\mathcal{H}''}\leq I_{\mathcal{H}}$ and $X_{\mathcal{H}''}\leq X_{\mathcal{H}}$ by Theorem \ref{minor-monotonicity}.

Assume that there are $c$ isolated vertices of $V\setminus V'$ in $\mathrm{Bip} \mathcal{H}''$. Let $\mathrm{Bip} \mathcal{H}'''$ be a bipartite graph obtained  from $\mathrm{Bip} \mathcal{H}'$ by adding $|V\setminus V'|$ vertices to $V'$ and $|V\setminus V'|-c$ edges such that for any non-isolated vertex $v'$ of $V\setminus V'$ in $\mathrm{Bip} \mathcal{H}''$, there exists exactly one edge $uv'$ in $\mathrm{Bip} \mathcal{H}'''$, where $uv'$ is an edge of $\mathrm{Bip} \mathcal{H}''$.
 Let $c(\mathrm{Bip} \mathcal{H}'')$ and $c(\mathrm{Bip} \mathcal{H}''')$ be the numbers of the connected components of $\mathrm{Bip} \mathcal{H}''$ and $\mathrm{Bip} \mathcal{H}'''$, respectively. Note that $\mathrm{Bip} \mathcal{H}'''$ is a spanning subgraph of $\mathrm{Bip} \mathcal{H}''$. Then $c(\mathrm{Bip} \mathcal{H}''')\geq c(\mathrm{Bip} \mathcal{H}'')$. Put $c'=c(\mathrm{Bip} \mathcal{H}''')-c(\mathrm{Bip} \mathcal{H}'')$.
Arbitrarily choose a set of $c'$ edges $\widehat{E}$ in $\mathrm{Bip} \mathcal{H}''$ such that $c(\mathrm{Bip} \mathcal{H}'''+\widehat{E})=c(\mathrm{Bip} \mathcal{H}'')$.
Let $\mathrm{Bip} \mathcal{H}^{4}$ be a bipartite graph obtained from $\mathrm{Bip} \mathcal{H}'''+\widehat{E}$ by adding one vertex $v'$ to $V$ and replacing $uv$ by $uv'$ for each edge $uv\in \widehat{E}$, where $u\in E'$ and $v\in V$. Let $\mathcal{H}^{4}$ be the hypergraph induced by $\mathrm{Bip} \mathcal{H}^{4}$ with $E'$ as the set of hyperedges.

Recall that the elements of a hypergraphical polymatroid $P_{\mathcal{H}}$ are hypertrees, which in turn are essentially degree distributions of maximal spanning forests of $\mathrm{Bip} \mathcal{H}$.  It is easy to see that $\mathrm{Bip} \mathcal{H}'''+\widehat{E}$ is a spanning subgraph of $\mathrm{Bip} \mathcal{H}''$. Recall that $c(\mathrm{Bip} \mathcal{H}'''+\widehat{E})= c(\mathrm{Bip} \mathcal{H}'')$. We have that each maximal spanning forest of $\mathrm{Bip} \mathcal{H}'''+\widehat{E}$ is a maximal spanning forest of $\mathrm{Bip} \mathcal{H}''$. Moreover, there is a one-to-one correspondence between maximal spanning forests of $\mathrm{Bip} \mathcal{H}'''+\widehat{E}$ and maximal spanning forests of $\mathrm{Bip} \mathcal{H}^{4}$. For each corresponding pair, their degrees at
each $e\in E'$
are the same. This implies that the hypergraphical polymatroid $P_{\mathcal{H}^{4}}$ is a subset of $P_{\mathcal{H}''}$.
Then by Theorem \ref{subset-monotonicity}, we know that $I_{\mathcal{H}^{4}}\leq I_{\mathcal{H}''}$ and $X_{\mathcal{H}^{4}}\leq X_{\mathcal{H}''}$.
  Moreover, by Theorem \ref{invariants} (i), we have that $I_{\mathcal{H}'}=I_{\mathcal{H}^{4}}$ and $X_{\mathcal{H}'}=X_{\mathcal{H}^{4}}$. Hence, the conclusion holds.
\end{proof}

\begin{remark} \label{counterexample}
Let $P$ and $P'$ be polymatroids on the same set $[n]$. If $P'\subseteq P$, then it is possible that there are $i,j\in \mathbb{Z}$ such that $[x^{i}y^{j}]\mathscr{T}_{P'}(x,y)> [x^{i}y^{j}]\mathscr{T}_{P}(x,y)$. Likewise, the monotonicity property of the polymatroid Tutte polynomial does not hold for minors either.
 Counterexamples can already be found among hypergraphical cases. Furthermore, for a polymatroid $P$ over $[n]$ with rank $d$, let us define $$T_{P}(x,y):=\frac{(x+y-xy)^{n}}{x^{n-d}y^{d}}\mathscr{T}_{P}\left(\frac{x}{x+y-xy},\frac{y}{x+y-xy}\right).$$ This is an equivalent form of $\mathscr{T}_{P}(x,y)$ and when written in this way, the invariant becomes a direct generalization of the Tutte polynomial of matroids, cf.\ \eqref{TM}. We claim that $T_{P}(x,y)$ does not satisfy the monotonicity property, either.

\begin{figure}
\centering
\includegraphics[width=0.8\textwidth]{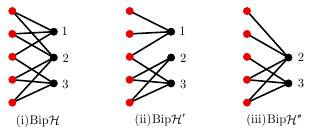}
\caption{$\text{Bip} \mathcal{H}$,  $\text{Bip}\mathcal{H}'$  and $\text{Bip} \mathcal{H}''$ in Remark \ref{counterexample}}
\label{1}
\end{figure}

For example, let us consider the hypergraphical polymatroids $P_{\mathcal{H}}$ and $P_{\mathcal{H}'}$ over $\{1,2,3\}$, where the hypergraphs $\mathcal{H}$ and $\mathcal{H}'$ are defined by their associated bipartite graphs in Figure \ref{1} (i) and (ii), respectively. Let also $P_{\mathcal{H}''}=P_{\mathcal{H}}\setminus\{1\}$, see Figure \ref{1} (iii) for the associated bipartite graph. Then $P_{\mathcal{H}'}\subseteq P_{\mathcal{H}}$ and $P_{\mathcal{H}''}$ is a minor of $P_{\mathcal{H}}$. On the other hand,
$$\mathscr{T}_{P_{\mathcal{H}}}(x,y)=x^{3}+3x^{2}y+3xy^{2}+y^{3}+2x^{2}+3xy+y^{2}-x-2,$$  $$\mathscr{T}_{P_{\mathcal{H}'}}(x,y)=x^{3}+3x^{2}y+3xy^{2}+y^{3}-2x^{2}-4xy-2y^{2}+x+y,$$  and
$$\mathscr{T}_{P_{\mathcal{H}''}}(x,y)=x^{2}+2xy+y^{2}-x-y.$$
In particular,
$[xy^{0}]\mathscr{T}_{P_{\mathcal{H}'}}(x,y)> [xy^{0}]\mathscr{T}_{P_{\mathcal{H}}}(x,y)$, $[x^{0}y]\mathscr{T}_{P_{\mathcal{H}'}}(x,y)> [x^{0}y]\mathscr{T}_{P_{\mathcal{H}}}(x,y)$, \\
$[x^{0}y^{0}]\mathscr{T}_{P_{\mathcal{H}'}}(x,y)> [x^{0}y^{0}]\mathscr{T}_{P_{\mathcal{H}}}(x,y)$ and $[x^{0}y^{0}]\mathscr{T}_{P_{\mathcal{H}''}}(x,y)>[x^{0}y^{0}]\mathscr{T}_{P_{\mathcal{H}}}(x,y)$.

Note that $\mathscr{T}_{P_{\mathcal{H}}}(x,y)-\mathscr{T}_{P_{\mathcal{H'}}}(x,y)=4x^{2}+7xy+3y^{2}-2x-y-2.$
This yields
\begin{eqnarray*}
&&T_{P_{\mathcal{H}}}(x,y)-T_{P_{\mathcal{H}'}}(x,y)\\
&=&\left(\mathscr{T}_{P_{\mathcal{H}}}(\frac{x}{x+y-xy},\frac{y}{x+y-xy})-\mathscr{T}_{P'_{\mathcal{H'}}}(\frac{x}{x+y-xy},\frac{y}{x+y-xy})\right)\cdot\frac{(x+y-xy)^{3}}{x^{-1}y^{4}}\\
&=&xy^{-4}(2x^{3}y^{3}-8x^{3}y^{2}-7x^{2}y^{3}+11x^{2}y^{2}+6x^{3}y+5xy^{3}).
\end{eqnarray*}
Hence, we have $[x^{4}y^{-2}]T_{P_{\mathcal{H}'}}(x,y)> [x^{4}y^{-2}]T_{P_{\mathcal{H}}}(x,y)$ and $[x^{3}y^{-1}]T_{P_{\mathcal{H}''}}(x,y)>$\\ $[x^{3}y^{-1}]T_{P_{\mathcal{H}}}(x,y)$.

As to $\mathcal{H}''$, whose ground set differs in size from that of $\mathcal{H}$, we have $$T_{P_{\mathcal{H}''}}(x,y)=x^{2}y^{-4}(x^{2}+2xy+y^{2}-(x+y)(x+y-xy)).$$ In particular, $[x^{4}y^{-2}]T_{P_{\mathcal{H}''}}(x,y)=0$. On the other hand,
\begin{eqnarray*}
T_{P_{\mathcal{H}}}(x,y)&=&xy^{-4}(x^{3}+3x^{2}y+3xy^{2}+y^{3}+(2x^{2}+3xy+y^{2})(x+y-xy)\\
&&-x(x+y-xy)^{2}-2(x+y-xy)^{3})
\end{eqnarray*}
has $[x^{4}y^{-2}]T_{P_{\mathcal{H}}}(x,y)=-7<0=[x^{4}y^{-2}]T_{P_{\mathcal{H}''}}(x,y)$.
\end{remark}

\section{Further applications of the deletion-contraction formula}
\label{sec5}

In this section, by applying the deletion-contraction formula of Section 3, we compute the coefficients of some terms of the polymatroid Tutte polynomial and characterize the class of hypergraphs  which are such that each coefficient in the exterior polynomial attains its maximal value.

\subsection{The extremal coefficients of $\mathscr{T}_{P}(x,y)$}
\label{sec5.1}
In this subsection, we study the coefficients of some extremal terms of the polymatroid Tutte polynomial.
We start with results for coefficients from \cite{Bernardi, Guan3}.

\begin{lemma} \cite{Bernardi}\label{coe-degree-n}
Let $P$ be a polymatroid over $[n]$. Then for any $k\in[n]\cup\{0\}$, $$[x^{n-k}y^{k}]\mathscr{T}_{P}(x,y)=\binom{n}{k}.$$
\end{lemma}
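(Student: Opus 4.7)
The plan is to prove Lemma \ref{coe-degree-n} by induction on $n$, using the deletion-contraction formulas of Section 3 as the main engine. Before anything, I would record the useful preliminary observation that every monomial in $\mathscr{T}_P(x,y)$ has total degree at most $n$: in the defining sum, expanding $(x+y-1)^{ie(\textbf{a})}$ produces terms of total degree $oi(\textbf{a})+oe(\textbf{a})+ie(\textbf{a})=|\mathrm{Int}(\textbf{a})\cup\mathrm{Ext}(\textbf{a})|\le n$. This degree bound will let me isolate the top part cleanly when I combine smaller $\mathscr{T}_{\widehat{P}^t_j}$'s.

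For the base case $n=1$, any polymatroid on $[1]$ consists of the single point $(f([1]))$ for which the unique index is simultaneously internally and externally active; hence $\mathscr{T}_P(x,y)=x+y-1$, and the coefficients of $x$ and $y$ are both $1=\binom{1}{0}=\binom{1}{1}$. For the inductive step, assume the claim holds for all polymatroids on $[n-1]$, fix a polymatroid $P$ on $[n]$, choose any $t\in[n]$ (by the $S_n$-invariance in Theorem \ref{invariants}(2) one may take $t=n$), and introduce $\alpha_t,\beta_t,T_t,\widehat{P}^t_j$ as in Section 3.

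If $\beta_t=\alpha_t$, Theorem \ref{deletion-contraction-1}(1) gives $\mathscr{T}_P(x,y)=(x+y-1)\mathscr{T}_{\widehat{P}^t_{\alpha_t}}(x,y)$. Applying the inductive hypothesis to $\widehat{P}^t_{\alpha_t}$, its degree-$(n-1)$ part is $\sum_{k=0}^{n-1}\binom{n-1}{k}x^k y^{n-1-k}$; multiplying by $x+y$ and invoking Pascal's identity $\binom{n-1}{k-1}+\binom{n-1}{k}=\binom{n}{k}$ produces $\sum_k\binom{n}{k}x^k y^{n-k}$ in degree $n$, while the $-1$ contributes only to degree $\le n-1$. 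If $\beta_t>\alpha_t$, Theorem \ref{deletion-contraction-J} yields
\begin{equation*}
\mathscr{T}_P(x,y)=x\mathscr{T}_{\widehat{P}^t_{\alpha_t}}(x,y)+y\mathscr{T}_{\widehat{P}^t_{\beta_t}}(x,y)+\sum_{j\in T_t\setminus\{\alpha_t,\beta_t\}}\mathscr{T}_{\widehat{P}^t_j}(x,y).
\end{equation*}
The preliminary degree bound says each $\mathscr{T}_{\widehat{P}^t_j}$ has total degree at most $n-1$, so the intermediate sum contributes nothing in degree $n$. The first two terms contribute (again by induction) $\binom{n-1}{k-1}$ and $\binom{n-1}{k}$ respectively to the coefficient of $x^k y^{n-k}$, which Pascal's identity assembles into $\binom{n}{k}$.

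The only potential obstacle is the bookkeeping in the $\beta_t>\alpha_t$ case: one has to make sure that the intermediate summands $\mathscr{T}_{\widehat{P}^t_j}$ really cannot contribute to the degree-$n$ coefficients, which is exactly what the preliminary degree bound secures. Once that is in place, the induction is essentially mechanical, driven by Pascal's identity; no deeper structural input about $P$ is required.
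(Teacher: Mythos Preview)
Your proof is correct. The induction via the deletion--contraction formulas (Theorem \ref{deletion-contraction-1}(1) when $\beta_t=\alpha_t$ and Theorem \ref{deletion-contraction-J} when $\beta_t>\alpha_t$), together with the degree bound $oi(\textbf{a})+oe(\textbf{a})+ie(\textbf{a})\le n$ and Pascal's identity, goes through cleanly; there is no circularity, since the proof of Theorem \ref{deletion-contraction-J} does not invoke Lemma \ref{coe-degree-n}.

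Regarding comparison: the paper does not supply its own proof of this lemma --- it is quoted from \cite[Proposition 4.11(d)]{Bernardi} as a known fact. Your argument is therefore a genuinely different route, one that stays internal to the present paper by using Section~3's deletion--contraction relation as the engine. The original proof in \cite{Bernardi} does not have Theorem \ref{deletion-contraction-J} available and proceeds by other means; what your approach buys is a self-contained derivation that simultaneously illustrates the utility of the new recursion. The only cost is that Lemma \ref{coe-degree-n} is stated \emph{before} Theorem \ref{deletion-contraction-J} in \cite{Bernardi} (and is logically prior there), so if one cared about matching the historical order of development, your proof would be anachronistic; within the present paper that is not an issue.
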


\begin{lemma} \cite{Guan3} \label{n-1-coe}
Let $P$ be a polymatroid on $[n]$ and let $f$ be its rank function. Then the coefficients of terms of degree $n-1$ for $\mathscr{T}_{P}(x,1)$ and $\mathscr{T}_{P}(1,y)$ are $\sum\limits_{i\in [n]}f(\{i\})-f([n])$ and $\sum\limits_{i\in [n]}f([n]\setminus \{i\})-(n-1)f([n])$, respectively.
\end{lemma}

By Lemmas \ref{coe-degree-n} and  \ref{n-1-coe}, we have the following statement.

\begin{lemma}\label{coe-degree-n-1}
Let $P$ be a polymatroid over $[n]$ with rank function $f$. Then
\begin{enumerate}
\item [(i)]  $[x^{n-1}y^{0}]\mathscr{T}_{P}(x,y)=\sum_{i\in [n]}f(\{i\})-f([n])-n$;
\item [(ii)] $[x^{0}y^{n-1}]\mathscr{T}_{P}(x,y)=\sum_{i\in [n]}f([n]\setminus \{i\})-(n-1)f([n])-n$.
\end{enumerate}
\end{lemma}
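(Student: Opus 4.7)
The plan is to recover the lemma directly from Lemma \ref{n-1-coe} and Lemma \ref{coe-degree-n} by a short coefficient extraction argument; no new combinatorics is required beyond a degree bound on $\mathscr{T}_{P}(x,y)$.

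First I would observe, straight from Definition \ref{pop}, that every monomial $x^{i}y^{j}$ occurring in $\mathscr{T}_{P}(x,y)$ satisfies $i+j\le n$. Indeed, each summand $x^{oi(\textbf{a})}y^{oe(\textbf{a})}(x+y-1)^{ie(\textbf{a})}$ has total degree $oi(\textbf{a})+oe(\textbf{a})+ie(\textbf{a})=|\mathrm{Int}(\textbf{a})\cup\mathrm{Ext}(\textbf{a})|\le n$. Writing $c_{i,j}$ for the coefficient of $x^{i}y^{j}$ in $\mathscr{T}_{P}(x,y)$, this means that in the row $i=n-1$ only $c_{n-1,0}$ and $c_{n-1,1}$ can be nonzero, and symmetrically in the column $j=n-1$ only $c_{0,n-1}$ and $c_{1,n-1}$ can be nonzero.

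Next I would set $y=1$ and read off the coefficient of $x^{n-1}$: this gives $c_{n-1,0}+c_{n-1,1}$, which by Lemma \ref{n-1-coe} equals $\sum_{i\in[n]}f(\{i\})-f([n])$. Applying Lemma \ref{coe-degree-n} with $k=n-1$ identifies $c_{n-1,1}=\binom{n}{n-1}=n$. Subtracting yields
$$x^{n-1}[\mathscr{T}_{P}(x,y)]=c_{n-1,0}=\sum_{i\in[n]}f(\{i\})-f([n])-n.$$
The second formula is obtained by the mirror argument: set $x=1$, extract $y^{n-1}[\mathscr{T}_{P}(1,y)]=c_{0,n-1}+c_{1,n-1}$ from Lemma \ref{n-1-coe}, and use Lemma \ref{coe-degree-n} with $k=1$ to identify $c_{1,n-1}=\binom{n}{1}=n$.

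The only thing that could conceivably go wrong is the degree bound $\deg\mathscr{T}_{P}\le n$ (if there were room for a $c_{n-1,j}$ with $j\ge 2$, the extraction would not be clean), but this is immediate from the explicit form of Definition \ref{pop}. Everything else is bookkeeping, so I do not expect any real obstacle.
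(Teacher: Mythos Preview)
Your argument is correct and is exactly the one the paper has in mind: the paper simply states that Lemma \ref{coe-degree-n-1} follows ``By Lemmas \ref{n-1-coe} and \ref{coe-degree-n}'', and you have spelled out precisely the coefficient extraction (using the degree bound $\deg\mathscr{T}_{P}\le n$) that this sentence is abbreviating.
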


We next study the coefficients of degree $n-1$ terms in $\mathscr{T}_{P}(x,y)$ and obtain a generalization of
Lemma \ref{coe-degree-n-1}, using the deletion-contraction relation of Section \ref{sec3}.

\begin{theorem} \label{coe-degree-n-1'}
Let $P$ be a polymatroid on $[n]$ with rank function $f$. Then for any $k\in[n]$,
\begin{equation}\label{degree n-1}
[x^{n-k}y^{k-1}]\mathscr{T}_{P}(x,y)
=\sum_{\stackrel{S\subseteq [n]}{|S|=k-1}}f(S)+\sum_{\stackrel{S'\subseteq [n]}{|S'|=k}}f(S')-\binom{n}{k-1}f([n])-k\binom{n}{k}.
\end{equation}
\end{theorem}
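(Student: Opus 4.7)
The plan is to induct on $n$, proving the formula for all $k \in [n]$ simultaneously. The base case $n = 1$ (which forces $k = 1$) is immediate: the unique polymatroid on $\{1\}$ satisfies $\mathscr{T}_P(x,y) = x + y - 1$, and the constant term $-1$ coincides with $f(\emptyset) + f(\{1\}) - f(\{1\}) - 1 = -1$.

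For the inductive step with $n \geq 2$, I fix $t = n$, permitted by the $S_n$-invariance of $\mathscr{T}_P$ (Theorem~\ref{invariants}(2)), and apply the deletion-contraction formula---Theorem~\ref{deletion-contraction-J} if $\beta_t > \alpha_t$, or Theorem~\ref{deletion-contraction-1}(1) if $\beta_t = \alpha_t$---to expand $\mathscr{T}_P(x,y)$ in terms of the $\mathscr{T}_{\widehat{P}^t_j}(x,y)$ for polymatroids on $[n]\setminus\{t\}$. Because $x^{n-k}y^{k-1}$ is a top-degree monomial of each $\mathscr{T}_{\widehat{P}^t_j}$, its coefficient there equals $\binom{n-1}{k-1}$ by Lemma~\ref{coe-degree-n}. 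Taking the coefficient of $x^{n-k}y^{k-1}$ on both sides of deletion-contraction then yields the uniform identity
\begin{equation*}
x^{n-k}y^{k-1}[\mathscr{T}_P] = x^{n-k-1}y^{k-1}[\mathscr{T}_{\widehat{P}^t_{\alpha_t}}] + x^{n-k}y^{k-2}[\mathscr{T}_{\widehat{P}^t_{\beta_t}}] + (\beta_t - \alpha_t - 1)\binom{n-1}{k-1},
\end{equation*}
where terms with a negative exponent are interpreted as zero. This covers both cases: when $\beta_t = \alpha_t$ the factor $(x+y-1)$ in Theorem~\ref{deletion-contraction-1}(1) contributes $-\binom{n-1}{k-1}$, matching $(\beta_t - \alpha_t - 1)\binom{n-1}{k-1}$, and the edge cases $k = 1$ and $k = n$ are also absorbed this way.

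Next I apply the induction hypothesis to the two smaller coefficients on the right. By Proposition~\ref{rank function of Pj}, the relevant rank functions are $f_{\widehat{P}^t_{\alpha_t}}(S) = f(S)$ and $f_{\widehat{P}^t_{\beta_t}}(S) = f(S \cup \{t\}) - f(\{t\})$. To reassemble the resulting sums over subsets of $[n]\setminus\{t\}$ into sums over all subsets of $[n]$, I use the splitting
\[
\sum_{\substack{S \subset [n] \\ |S| = \ell}} f(S) = \sum_{\substack{S \subset [n]\setminus\{t\} \\ |S| = \ell}} f(S) + \sum_{\substack{S' \subset [n]\setminus\{t\} \\ |S'| = \ell - 1}} f(S' \cup \{t\})
\]
for $\ell = k-1$ and $\ell = k$. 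After substituting $\alpha_t = f([n]) - f([n]\setminus\{t\})$ and $\beta_t = f(\{t\})$ into the correction term, all contributions of $f(\{t\})$ and $f([n]\setminus\{t\})$ cancel, while the $f([n])$ coefficient and the binomial constants collapse via Pascal's identity $\binom{n-1}{k-2} + \binom{n-1}{k-1} = \binom{n}{k-1}$ and $\binom{n-1}{k-1} + \binom{n-1}{k} = \binom{n}{k}$ to deliver exactly $-\binom{n}{k-1} f([n]) - k\binom{n}{k}$.

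The main obstacle is the careful bookkeeping of these cancellations. Each of the three terms on the right-hand side of the displayed identity produces contributions involving $f(\{t\})$, $f([n]\setminus\{t\})$, $f([n])$, and the three binomial coefficients $\binom{n-1}{k-2}, \binom{n-1}{k-1}, \binom{n-1}{k}$. One must verify that every $t$-dependent piece except the single $-\binom{n}{k-1}f([n])$ term vanishes---reflecting the $S_n$-invariance the target formula must enjoy---before Pascal's identity collapses the remaining binomial terms into the clean final expression.
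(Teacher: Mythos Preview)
Your proposal is correct and follows essentially the same route as the paper: induction on $n$ via the deletion--contraction formula, using Proposition~\ref{rank function of Pj} for the rank functions of $\widehat{P}^t_{\alpha_t}$ and $\widehat{P}^t_{\beta_t}$ and Lemma~\ref{coe-degree-n} for the top-degree contributions of the intermediate slices. The one organizational difference is that the paper splits off the case where $\alpha_{t'}=\beta_{t'}$ for \emph{all} $t'$ (so $P$ is a singleton and $\mathscr{T}_P=(x+y-1)^n$) and verifies the formula there by direct computation, whereas you absorb the case $\alpha_t=\beta_t$ into the same displayed identity by invoking Theorem~\ref{deletion-contraction-1}(1) and reading off the $-\binom{n-1}{k-1}$ correction from the factor $(x+y-1)$; this is a mild but genuine streamlining.
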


\begin{proof}
By Lemma \ref{coe-degree-n-1} (ii), the conclusion is true for $k=n$. We next assume $k\in [n-1]$.
For any $t\in [n]$,  let $\alpha_{t}=f([n])-f([n]\setminus \{t\})$ and $\beta_{t}=f(\{t\})$.

If $\alpha_{t'}=\beta_{t'}$ for all $t'\in [n]$, then $P$ has a unique basis. Hence,
$\mathscr{T}_{P}(x,y)=(x+y-1)^{n}$.
Then the left-hand side of  equation \eqref{degree n-1} is
$$[x^{n-k}y^{k-1}]\mathscr{T}_{P}(x,y)=-\binom{n}{n-1}\binom{n-1}{k-1}
=-n\binom{n-1}{k-1}.$$
Moreover,
the right-hand side of equation \eqref{degree n-1} is $$\sum_{\stackrel{S\subseteq [n]}{|S|=k-1}}f(S)+\sum_{\stackrel{S'\subseteq [n]}{|S'|=k}}f(S')-\binom{n}{k-1}f([n])-k\binom{n}{k}=-k\binom{n}{k}=-n\binom{n-1}{k-1}.$$

Assume that there exists some $t\in [n]$ satisfying $\alpha_{t}\neq\beta_{t}$. Then we prove  equation \eqref{degree n-1} by induction on $n$. It is obvious in the case $n=1$.
Suppose that $n\geq2$. Let $T_{t}=\{\alpha_{t}, \alpha_{t}+1,\ldots,\beta_{t}\}$. For any $j\in T_{t}$, let $P^{t}_{j}=\{(a_{1},\ldots,a_{n})\in P\mid a_{t}=j\}$ and $\widehat{P}^{t}_{j}=\{(a_{1},\ldots,a_{t-1},a_{t+1},\ldots,a_{n})\in \mathbb{Z}^{n-1}\mid (a_{1},\ldots,a_{n})\in P^{t}_{j}\}$, furthermore let $f^{t}_{j}$ be the rank function of $\widehat{P}^{t}_{j}$.

By Proposition \ref{rank function of Pj}, for any subset $I\subseteq[n]\setminus \{t\}$, we see that $f^{t}_{\alpha_{t}}(I)=f(I)$ and $f^{t}_{\beta_{t}}(I)=f(I\cup \{t\})-f(\{t\})$.
 Then by the induction hypothesis, for any $k\in[n-1]$,
\begin{eqnarray*}
  &&[x^{n-k-1}y^{k-1}]\mathscr{T}_{P^{t}_{\alpha_{t}}}(x,y) \\
  &=&\sum_{\stackrel{T\subseteq [n]\setminus \{t\}}{|T|=k-1}}f^{t}_{\alpha_{t}}(T)+\sum_{\stackrel{T'\subseteq [n]\setminus \{t\}}{|T'|=k}}f^{t}_{\alpha_{t}}(T')-\binom{n-1}{k-1}f^{t}_{\alpha_{t}}([n]\setminus \{t\})-k\binom{n-1}{k}\\
&=&\sum_{\stackrel{T\subseteq [n]\setminus \{t\}}{|T|=k-1}}f(T)+\sum_{\stackrel{T'\subseteq [n]\setminus \{t\}}{|T'|=k}}f(T')-\binom{n-1}{k-1}f([n]\setminus \{t\})-k\binom{n-1}{k}\\
\end{eqnarray*}
 and
\begin{eqnarray*}
 && [x^{n-k}y^{k-2}]\mathscr{T}_{P^{t}_{\beta_{t}}}(x,y)\\
 &=&\sum_{\stackrel{T\subseteq [n]\setminus \{t\}}{|T|=k-2}}f^{t}_{\beta_{t}}(T)+\sum_{\stackrel{T'\subseteq [n]\setminus \{t\}}{|T'|=k-1}}f^{t}_{\beta_{t}}(T')-\binom{n-1}{k-2}f^{t}_{\beta_{t}}([n]\setminus \{t\})-(k-1)\binom{n-1}{k-1}\\
&=&\sum_{\stackrel{T\subseteq [n]\setminus \{t\}}{|T|=k-2}}(f(T\cup\{t\})-f(\{t\}))+\sum_{\stackrel{T\subseteq [n]\setminus \{t\}}{|T'|=k-1}}(f(T'\cup\{t\})-f(\{t\}))\\
  &&-\binom{n-1}{k-2}(f([n])-f(\{t\}))-(k-1)\binom{n-1}{k-1}\\
&=&\sum_{\stackrel{T\subseteq [n]\setminus \{t\}}{|T|=k-2}}f(T\cup\{t\})+\sum_{\stackrel{T\subseteq [n]\setminus \{t\}}{|T'|=k-1}}f(T'\cup\{t\})-\binom{n-1}{k-1}f(\{t\})\\
  &&-\binom{n-1}{k-2}f([n])-(k-1)\binom{n-1}{k-1}.
\end{eqnarray*}
By Lemma \ref{coe-degree-n},
\begin{eqnarray*}
 \sum_{j\in T_{t}\setminus \{\alpha_{t},\beta_{t}\}}[x^{n-k}y^{k-1}]\mathscr{T}_{P^{t}_{j}}(x,y)&=&\sum_{j\in T_{t}\setminus \{\alpha_{t},\beta_{t}\}}\binom{n-1}{k-1}\\
&=&\binom{n-1}{k-1}(\beta_{t}-\alpha_{t}-1)\\
&=&\binom{n-1}{k-1}(f(\{t\})+f([n]\setminus \{t\})-f([n])-1).
\end{eqnarray*}

Note that $$\sum_{\stackrel{T\subseteq [n]\setminus \{t\}}{|T|=k-1}}f(T)+\sum_{\stackrel{T\subseteq [n]\setminus \{t\}}{|T|=k-2}}f(T\cup\{t\})=\sum_{\stackrel{S\subseteq [n]}{|S|=k-1}}f(S)$$ and
$$\sum_{\stackrel{T'\subseteq [n]\setminus \{t\}}{|T'|=k}}f(T')+\sum_{\stackrel{T'\subseteq [n]\setminus \{t\}}{|T'|=k-1}}f(T'\cup\{t\})=\sum_{\stackrel{S'\subseteq [n]}{|S'|=k}}f(S').$$
By Theorem \ref{deletion-contraction-J},
\begin{multline*}
[x^{n-k}y^{k-1}]\mathscr{T}_{P}(x,y)=[x^{n-k-1}y^{k-1}]\mathscr{T}_{\widehat{P}^{t}_{\alpha_{t}}}(x,y)+[x^{n-k}y^{k-2}]\mathscr{T}_{\widehat{P}^{t}_{\beta_{t}}}(x,y)
\\+\sum_{j\in T_{t}\setminus \{\alpha_{t},\beta_{t}\}}[x^{n-k}y^{k-1}]\mathscr{T}_{\widehat{P}^{t}_{j}}(x,y).
\end{multline*}
Hence, equation \eqref{degree n-1} is valid.
\end{proof}

Similarly to Theorem \ref{coe-degree-n-1'}, we can get formulas for $[x^{n-2}y^0]\mathscr{T}_{P}(x,y)$ and $[x^0y^{n-2}]\mathscr{T}_{P}(x,y)$.

\begin{theorem} \label{coe-degree-n-2}
Let $P$ be a polymatroid on $[n]$  with rank function $f$. Then
\begin{enumerate}
\item [(i)]  $[x^{n-2}y^0]\mathscr{T}_{P}(x,y)=\binom{-f([n])+1-n+\sum\limits_{i\in [n]}f(\{i\})}{2}-\left(\sum\limits_{\{i,j\}\subseteq [n]}\binom{f(\{i\})+f(\{j\})-f(\{i,j\})}{2}\right)$;
\item [(ii)] $[x^0y^{n-2}]\mathscr{T}_{P}(x,y)=\binom{-(n-1)f([n])+1-n+\sum\limits_{i\in [n]}f([n]\setminus \{i\})}{2}\\-\left(\sum\limits_{\{i,j\}\subseteq [n]}\binom{f([n]\setminus\{i\})+f([n]\setminus\{j\})-f([n]\setminus\{i,j\})-f([n])}{2}\right)$.
\end{enumerate}
\end{theorem}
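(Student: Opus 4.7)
The plan is to prove statement (1) by induction on $n$ via the new deletion-contraction formula of Theorem \ref{deletion-contraction-J}, and then to deduce statement (2) from (1) using the duality $\mathscr{T}_P(x,y) = \mathscr{T}_{-P}(y,x)$ of Theorem \ref{invariants}~(3). Indeed $y^{n-2}[\mathscr{T}_P(x,y)] = x^{n-2}[\mathscr{T}_{-P}(x,y)]$, and substituting the rank function $f_{-P}(S) = f([n]\setminus S) - f([n])$ of $-P$ into the formula of (1) produces exactly the expression of (2), since $f_{-P}(\{i\}) + f_{-P}(\{j\}) - f_{-P}(\{i,j\})$ simplifies to $f([n]\setminus\{i\}) + f([n]\setminus\{j\}) - f([n]\setminus\{i,j\}) - f([n])$, while $\sum_i f_{-P}(\{i\}) - f_{-P}([n])$ simplifies to $\sum_i f([n]\setminus\{i\}) - (n-1)f([n])$.

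Interpreting $x^{n-2}[\mathscr{T}_P(x,y)]$ as the coefficient of $x^{n-2}y^0$, the base cases $n = 1$ (where $\mathscr{T}_P = x+y-1$) and $n = 2$ (by direct enumeration of the cases $\alpha_1 = \beta_1$ and $\alpha_1 < \beta_1$) are checked by hand. The inductive step splits in two: if $\alpha_t = \beta_t$ for every $t$, then $P$ has a unique basis and $\mathscr{T}_P = (x+y-1)^n$, while by submodularity the rank function is additive on pairs of singletons, so both sides of (1) reduce to $\binom{n}{2}$; otherwise I choose $t$ with $\alpha_t < \beta_t$ and apply Theorem \ref{deletion-contraction-J}. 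Since the term $y\,\mathscr{T}_{\widehat{P}^t_{\beta_t}}$ contributes nothing at $y^0$, extracting the desired coefficient yields
\begin{equation*}
[x^{n-2}y^0]\mathscr{T}_P \;=\; [x^{n-3}y^0]\mathscr{T}_{\widehat{P}^t_{\alpha_t}} \;+\; \sum_{j\in T_t\setminus\{\alpha_t,\beta_t\}}[x^{n-2}y^0]\mathscr{T}_{\widehat{P}^t_j}.
\end{equation*}
The first summand is evaluated by the induction hypothesis (it is the analogous coefficient for a size-$(n-1)$ polymatroid), while each term of the sum is furnished by Lemma \ref{coe-degree-n-1}.

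The main obstacle lies in the bookkeeping. By Proposition \ref{rank function of Pj}, $f^t_{\alpha_t}$ agrees with $f$ on every subset of $[n]\setminus\{t\}$ (submodularity forces the $\min$), whereas for $\alpha_t < j < \beta_t$ one has $f^t_j([n]\setminus\{t\}) = f([n]) - j$ and $f^t_j(\{i\}) = f(\{i\}) - \max(0,\, j-\gamma_{it})$ with $\gamma_{it} := f(\{i,t\}) - f(\{i\}) \in [\alpha_t,\beta_t]$. The identity
\begin{equation*}
\sum_{j=\alpha_t+1}^{\beta_t-1}\max(0,\, j-\gamma_{it}) \;=\; \binom{\beta_t - \gamma_{it}}{2} \;=\; \binom{f(\{i\}) + f(\{t\}) - f(\{i,t\})}{2}
\end{equation*}
shows that the extra contributions from the sum over $j$ recover exactly the pairs $\{i,t\}$ missing from $\sum_{\{i,j\}\subset[n]\setminus\{t\}}$ (used by the induction hypothesis) as compared to the full $\sum_{\{i,j\}\subset[n]}$ appearing in the statement. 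After this cancellation, what remains to verify is the elementary binomial identity
\begin{equation*}
\binom{u-k+1}{2} + (k-1)(u-k) + \binom{k}{2} \;=\; \binom{u}{2},
\end{equation*}
where $u = \sum_i f(\{i\}) - f([n]) - n + 1$ and $k = \beta_t - \alpha_t$; a direct expansion (equivalent to $\binom{u}{2} - \binom{u-k+1}{2} = (k-1)(2u-k)/2$) confirms it and completes the induction.
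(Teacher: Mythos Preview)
Your proof is correct and follows exactly the approach the paper intends: the paper omits the proof of Theorem \ref{coe-degree-n-2}, saying only that it proceeds ``similarly'' to Theorem \ref{coe-degree-n-1'}, namely by induction on $n$ via the deletion-contraction formula of Theorem \ref{deletion-contraction-J}, and that is precisely what you carry out (with Lemma \ref{coe-degree-n-1} playing the role that Lemma \ref{coe-degree-n} played there). Your use of duality (Theorem \ref{invariants} (3)) to derive (2) from (1) is a natural shortcut the paper does not mention explicitly but is entirely in keeping with its methods.
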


\begin{proof}
Similarly to the proof of Theorem \ref{coe-degree-n-1'},
we divide the argument into two cases.

Case 1: $\alpha_{t'}=\beta_{t'}$ for all $t'\in [n]$. In this case, it is simple to check both sides of each equation.

Case 2: There exists some $t\in [n]$ satisfying $\alpha_{t}\neq\beta_{t}$. In this case, we again proceed by induction on $n$. The rest is merely a matter of calculation, which we leave
to the reader.
\end{proof}

Lemma \ref{coe-degree-n}, Theorems \ref{coe-degree-n-1'} and \ref{coe-degree-n-2} imply the following conclusion.

\begin{corollary} \label{n-2-coe}
For a polymatroid $P$ on $[n]$ with rank function $f$,
\begin{enumerate}
\item [(i)]  $[x^{n-2}]\mathscr{T}_{P}(x,1)=\binom{-f([n])+1+\sum\limits_{i\in [n]}f(\{i\})}{2}-\left(\sum\limits_{\{i,j\}\subseteq [n]}\binom{f(\{i\})+f(\{j\})-f(\{i,j\})+1}{2}\right)$;
\item [(ii)] $[y^{n-2}]\mathscr{T}_{P}(1,y)=\binom{-(n-1)f([n])+1+\sum\limits_{i\in [n]}f([n]\setminus \{i\})}{2}\\-\left(\sum\limits_{\{i,j\}\subseteq [n]}\binom{f([n]\setminus\{i\})+f([n]\setminus\{j\})-f([n]\setminus\{i,j\})-f([n])+1}{2}\right)$.
\end{enumerate}
\end{corollary}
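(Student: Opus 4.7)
The plan is to derive both formulas by assembling, from previously computed ingredients, all coefficients of monomials in $\mathscr{T}_{P}(x,y)$ whose $x$-degree (respectively $y$-degree) is exactly $n-2$, and then specializing $y=1$ (respectively $x=1$). First, by Lemma \ref{coe-degree-n} every monomial of $\mathscr{T}_{P}(x,y)$ has total degree at most $n$, hence
\[
x^{n-2}[\mathscr{T}_{P}(x,1)] \;=\; x^{n-2}y^{0}[\mathscr{T}_{P}(x,y)] \,+\, x^{n-2}y^{1}[\mathscr{T}_{P}(x,y)] \,+\, x^{n-2}y^{2}[\mathscr{T}_{P}(x,y)].
\]
Each of the three coefficients on the right is already known: the first is Theorem \ref{coe-degree-n-2}(1), the second is Theorem \ref{coe-degree-n-1'} applied with $k=2$ (giving $\sum_{i}f(\{i\}) + \sum_{\{i,j\}}f(\{i,j\}) - nf([n]) - n(n-1)$), and the third equals $\binom{n}{n-2}=\binom{n}{2}$ by Lemma \ref{coe-degree-n}. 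Substituting these three expressions produces an explicit formula for $x^{n-2}[\mathscr{T}_{P}(x,1)]$.

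To identify this formula with the right-hand side of (1), I would use two elementary shifts. The first is the pointwise identity $\binom{b+1}{2} - \binom{b}{2} = b$ applied to $b_{ij}=f(\{i\})+f(\{j\})-f(\{i,j\})$, giving
\[
\sum_{\{i,j\}\subset [n]}\!\!\binom{b_{ij}+1}{2} \;-\; \sum_{\{i,j\}\subset [n]}\!\!\binom{b_{ij}}{2} \;=\; (n-1)\sum_{i\in[n]}f(\{i\}) \;-\; \sum_{\{i,j\}\subset[n]}f(\{i,j\}),
\]
since each $f(\{i\})$ occurs in exactly $n-1$ pairs. The second is the analogous shift for the outer binomial, which (writing $A=\sum_{i}f(\{i\})$) gives
\[
\binom{A-f([n])+1}{2} - \binom{A-f([n])+1-n}{2} \;=\; n\bigl(A-f([n])\bigr) - \binom{n}{2}.
\]
Plugging these into the difference of the claimed formula and the assembled expression leaves a linear combination of $A$, $f([n])$, $\sum_{\{i,j\}}f(\{i,j\})$, $n(n-1)$ and $\binom{n}{2}$ that cancels identically; this is a short, purely algebraic verification.

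For part (2), the shortest route is duality. By Theorem \ref{invariants}(3), $\mathscr{T}_{P}(x,y)=\mathscr{T}_{-P}(y,x)$, so $y^{n-2}[\mathscr{T}_{P}(1,y)] = x^{n-2}[\mathscr{T}_{-P}(x,1)]$, and part (1) applied to $-P$ does the job: the rank function of $-P$ is $T\mapsto f([n]\setminus T) - f([n])$, under which $\sum_{i}f_{-P}(\{i\}) - f_{-P}([n])$ becomes $\sum_{i}f([n]\setminus\{i\}) - (n-1)f([n])$ and $f_{-P}(\{i\})+f_{-P}(\{j\})-f_{-P}(\{i,j\})$ becomes $f([n]\setminus\{i\})+f([n]\setminus\{j\})-f([n]\setminus\{i,j\})-f([n])$, matching exactly the expressions in (2). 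Alternatively one may repeat the three-term assembly using Theorem \ref{coe-degree-n-2}(2) and Theorem \ref{coe-degree-n-1'} with $k=n-1$.

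The main obstacle is entirely bookkeeping: keeping the signs straight and matching the two different shifts of the binomial coefficients. No new structural input beyond Lemma \ref{coe-degree-n}, Theorem \ref{coe-degree-n-1'} and Theorem \ref{coe-degree-n-2} is required.
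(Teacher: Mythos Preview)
Your proposal is correct and follows essentially the same approach as the paper: the paper's proof is precisely the three-term assembly $x^{n-2}[\mathscr{T}_{P}(x,1)]=x^{n-2}y^{0}[\mathscr{T}_{P}]+x^{n-2}y^{1}[\mathscr{T}_{P}]+x^{n-2}y^{2}[\mathscr{T}_{P}]$ (and similarly for $y^{n-2}$), invoking Lemma \ref{coe-degree-n}, Theorem \ref{coe-degree-n-1'} and Theorem \ref{coe-degree-n-2}. Your use of duality for part (2) is a harmless variation on the paper's direct repetition of the three-term sum, and your explicit binomial-shift bookkeeping is simply a more detailed version of what the paper leaves to the reader.
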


\begin{proof}
Note that
$[x^{n-2}]\mathscr{T}_{P}(x,1)=[x^{n-2}y^0]\mathscr{T}_{P}(x,y)+[x^{n-2}y]\mathscr{T}_{P}(x,y)+[x^{n-2}y^{2}]\mathscr{T}_{P}(x,y)$
 and $[y^{n-2}]\mathscr{T}_{P}(1,y)=[x^0y^{n-2}]\mathscr{T}_{P}(x,y)+[xy^{n-2}]\mathscr{T}_{P}(x,y)+[x^{2}y^{n-2}]\mathscr{T}_{P}(x,y)$. The conclusion follows from Lemma \ref{coe-degree-n}, Theorems \ref{coe-degree-n-1'} and \ref{coe-degree-n-2}
 \end{proof}

For hypergraphical cases, Corollary \ref{n-2-coe} (i) is consistent with the result in \cite[Proposition 5.5]{Kalman3}.

\begin{corollary} \cite{Kalman3} \label{complete-example}
Let $\mathcal{H}$ be a hypergraph and  $\mathrm{Bip} \mathcal{H}=(V\cup E,\mathcal{E})$ be its associated bipartite graph. Then $[x^{2}]I_{\mathcal{H}}(x)=\binom{|\mathcal{E}|-|V|-|E|+2}{2}-N$, where $N$ is the number of 4-cycles in $\mathrm{Bip} \mathcal{H}$.
\end{corollary}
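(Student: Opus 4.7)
The plan is to deduce this hypergraphical formula directly from Corollary~\ref{n-2-coe}(1) by substituting the explicit rank function of the hypergraphical polymatroid $P_{\mathcal{H}}$, whose ground set is $E$ (so $n=|E|$) and whose rank function is the submodular function $\mu$ from Section~2. Since $I_{\mathcal{H}}(x)=I_{P_{\mathcal{H}}}(x)=x^{|E|}\mathscr{T}_{P_{\mathcal{H}}}(x^{-1},1)$, the coefficient $x^{2}[I_{\mathcal{H}}(x)]$ equals $x^{|E|-2}[\mathscr{T}_{P_{\mathcal{H}}}(x,1)]$, so Corollary~\ref{n-2-coe}(1) applies directly with $f=\mu$.

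Next, I would evaluate each rank-function quantity. For a single hyperedge $e\in E$, the restriction $\mathrm{Bip}\mathcal{H}|_{\{e\}}$ is a star, giving $\mu(\{e\})=|e|-1$, and hence $\sum_{e\in E}\mu(\{e\})=|\mathcal{E}|-|E|$. Under the connectivity assumption on $\mathrm{Bip}\mathcal{H}$ implicit in the statement, $\mu(E)=|V|-1$, so
$$\sum_{e\in E}\mu(\{e\})-\mu(E)+1=|\mathcal{E}|-|V|-|E|+2.$$
For an unordered pair $\{e_{1},e_{2}\}\subset E$, the identity $|e_{1}\cup e_{2}|=|e_{1}|+|e_{2}|-|e_{1}\cap e_{2}|$ together with $c(\mathrm{Bip}\mathcal{H}|_{\{e_{1},e_{2}\}})\in\{1,2\}$ (being $1$ precisely when $e_{1}\cap e_{2}\neq\emptyset$) yields
$$\mu(\{e_{1}\})+\mu(\{e_{2}\})-\mu(\{e_{1},e_{2}\})+1=\begin{cases}|e_{1}\cap e_{2}|,&e_{1}\cap e_{2}\neq\emptyset,\\ 1,&e_{1}\cap e_{2}=\emptyset,\end{cases}$$
so in either case the binomial coefficient $\binom{\,\cdot\,}{2}$ equals $\binom{|e_{1}\cap e_{2}|}{2}$ (the disjoint case contributing $0$).

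Finally, I would identify $\sum_{\{e_{1},e_{2}\}\subset E}\binom{|e_{1}\cap e_{2}|}{2}$ with $N$ via the straightforward bijection: each $4$-cycle in $\mathrm{Bip}\mathcal{H}$ is uniquely determined by an unordered pair of hyperedges $\{e_{1},e_{2}\}$ together with an unordered pair of vertices in $e_{1}\cap e_{2}$, and conversely. Substituting into Corollary~\ref{n-2-coe}(1) then produces exactly $\binom{|\mathcal{E}|-|V|-|E|+2}{2}-N$. None of these steps is genuinely difficult; the only delicate point worth flagging is the implicit connectivity hypothesis on $\mathrm{Bip}\mathcal{H}$, since if it has $c\geq 2$ components the same computation would give $\binom{|\mathcal{E}|-|V|-|E|+c+1}{2}-N$ in place of the stated formula, which is why the result from \cite{Kalman3} is phrased for connected hypergraphs.
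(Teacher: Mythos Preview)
Your proposal is correct and follows exactly the route the paper intends: the corollary is stated immediately after Corollary~\ref{n-2-coe} with the remark that ``for hypergraphical cases, Corollary~\ref{n-2-coe}(1) is consistent with the result in \cite{Kalman3},'' and no further proof is given; you have simply carried out the implied specialization of Corollary~\ref{n-2-coe}(1) to $f=\mu$ explicitly and correctly. Your observation about the implicit connectivity hypothesis (needed so that $\mu(E)=|V|-1$) is apt and worth retaining.
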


\subsection{A connectivity condition for hypergraphs}
\label{sec5.2}

K\'{a}lm\'{a}n \cite{Kalman1} computed the exterior polynomials of hypergraphs induced by complete bipartite graphs.

\begin{observation} \cite{Kalman1} \label{complete-example}
Let $P$ be a polymatroid on $[n]$  with rank function $f$. If $f(I)=f([n])$ for all nonempty subsets $I\subseteq [n]$, then $[y^{i}]X_{\mathcal{H}}(y)=\binom{f([n])+i-1}{i}$ for all nonnegative integers $i\leq n-1$.
\end{observation}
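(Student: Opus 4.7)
The plan is a direct three-step computation. First, I would identify $P$ explicitly: the assumption $f(I) = d := f([n])$ for every nonempty $I$ forces $P$ to be the set of nonnegative integer compositions of $d$ into $n$ parts, that is,
\[P = \{\textbf{a} \in \mathbb{Z}_{\ge 0}^{n} : a_{1} + \cdots + a_{n} = d\}.\]
Nonnegativity $a_{k} \ge 0$ is the constraint for $I = [n] \setminus \{k\}$ combined with $\sum_{i} a_{i} = d$, while all remaining constraints $\sum_{i \in I} a_{i} \le d$ follow automatically from nonnegativity and the sum condition.

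Next, I would describe external activity on $P$. For $\textbf{a} \in P$ and nonempty $I \subset [n]$, tightness $\sum_{i \in I} a_{i} = d$ is equivalent to $\mathrm{supp}(\textbf{a}) \subset I$, where $\mathrm{supp}(\textbf{a})$ denotes the set of indices with $a_{i} > 0$. By Theorem \ref{ia} (2), an index $i$ is externally active iff there is a tight $I'$ with $\min I' = i$, and this (when $\textbf{a} \neq \textbf{0}$) reduces to $i \le \min \mathrm{supp}(\textbf{a})$ by taking $I' = \{i\} \cup \mathrm{supp}(\textbf{a})$. Hence $\overline{\epsilon}(\textbf{a}) = n - \min \mathrm{supp}(\textbf{a})$. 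The edge case $d = 0$ gives $P = \{\textbf{0}\}$ and $X_{P}(y) = 1$, which matches the right-hand side of the claim via $\binom{-1}{0} = 1$ and $\binom{i-1}{i} = 0$ for $i \ge 1$.

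Finally, counting becomes routine: $y^{i}[X_{P}(y)]$ enumerates the bases $\textbf{a} \in P$ with smallest positive coordinate at position $n - i$. Such bases are determined by choosing $a_{n-i} \ge 1$ and $a_{n-i+1}, \ldots, a_{n} \ge 0$ with $a_{n-i} + \cdots + a_{n} = d$, which by the standard stars-and-bars substitution $b = a_{n-i} - 1$ gives exactly $\binom{d + i - 1}{i}$ solutions.

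The main obstacle is simply the tight-set characterization; once that is in hand, Theorem \ref{ia} (2) converts it directly into external activity and the count is elementary. An alternative inductive proof is also available via Corollary \ref{deletion-contraction-IX}: here $\alpha_{t} = 0$ and $\beta_{t} = d$ for every $t$, and by Proposition \ref{rank function of Pj} each slice $\widehat{P}^{t}_{j}$ satisfies the same symmetric rank condition with total rank $d - j$ on $n - 1$ elements, so the recursion for $X_{P}(y)$ collapses after applying the hockey-stick identity.
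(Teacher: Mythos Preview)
Your argument is correct. The paper does not actually prove this observation; it simply quotes the result from \cite{Kalman1} (where it is computed for the hypergraphical polymatroid of a complete bipartite graph, which is exactly the polymatroid with constant nonzero rank function). Your direct computation---identifying $P$ as the simplex of nonnegative integer compositions of $d$, reading off $\overline{\epsilon}(\textbf{a}) = n - \min\mathrm{supp}(\textbf{a})$ via Theorem~\ref{ia}(2), and then doing a stars-and-bars count---is a clean, self-contained verification that does not rely on the cited reference. The alternative inductive route you sketch through Corollary~\ref{deletion-contraction-IX} is also valid and is in fact the mechanism the paper uses one step later, inside the proof of Lemma~\ref{connected-exterior}.
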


By Corollary \ref{HG-monotonicity} and Observation \ref {complete-example},  the coefficient of the degree $k$ term of the exterior polynomial of any hypergraph $\mathcal{H}=(V,E)$ is at most $\binom{|V|+k-2}{k}$. We now characterize hypergraphs attaining this maximal value, for larger and larger sets of consecutive exponents $k$.
We first show the key lemma in this subsection.

\begin{lemma} \label{connected-exterior}
Let $P\subseteq \mathbb{Z}^{n}_{\geq 0}$ be a polymatroid with  rank function $f$. Then for all $k\leq n-1$, the coefficient $[y^{i}]X_{P}(y)=\binom{f([n])+i-1}{i}$ for all $i\leq k$ if and only if $f([n]\setminus J)=f([n])$ for all $J\subseteq [n]$ with $|J|=k$.
\end{lemma}

\begin{proof}
It is obvious for the case $k=0$ since the constant term of the exterior polynomial for any polymatroid is $1$. We now assume that $k\geq 1$.
We first prove the sufficiency by induction on $n$. If $n=k+1$, then it follows from Observation \ref {complete-example}.

Assume that $n>k+1$. Note that if $P\subseteq \mathbb{Z}^{n}_{\geq 0}$, then $f$ is non-decreasing by \eqref{RF}, that is, $f(A)\leq f(B)$ if $A\subseteq B\subseteq [n]$. This implies that $f([n]\setminus I)=f([n])$ for all subsets $I\subseteq [n]$ with $|I|\leq k$. Hence, $T_{t}=\{0,1,2,\ldots, f(\{t\})\}$ for any $t\in [n]$. For any $j\in T_{t}$, let $f^{t}_{j}$ be the rank function of the polymatroid $\widehat{P}^{t}_{j}$.
\begin{enumerate}
\item [(i)] For any subset $J'\subseteq [n]\setminus \{t\}$ with $|J'|= k-1$, we have that $$f([n]\setminus J')-j=f([n])-j\leq f([n])=f([n]\setminus \{t\}\setminus J').$$ Then
\ $f^{t}_{j}([n]\setminus \{t\}\setminus J')=f([n])-j=f^{t}_{j}([n]\setminus \{t\})$ by Proposition \ref{rank function of Pj}.
\item [(ii)] By Proposition \ref{rank function of Pj}, for any subset $J''\subseteq [n]\setminus \{t\}$ with $|J''|=k$, we have that
$$f^{t}_{f(\{t\})}([n]\setminus \{t\}\setminus J'')=f([n]\setminus J'')-f(\{t\})=f([n])-f(\{t\})=f^{t}_{f(\{t\})}([n]\setminus \{t\}).$$
\end{enumerate}

By the induction hypothesis, for any nonnegative integer $i\leq k$, we have $[y^{i}]X_{\widehat{P}^{t}_{f(\{t\})}}(y)=\binom{f([n])+i-1-f(\{t\})}{i}$ and $[y^{i-1}]X_{\widehat{P}^{t}_{j}}(y)=\binom{f([n])-j+i-1-1}{i-1}$ for any $j\in T_{t}\setminus \{f(\{t\})\}$. Hence, by Corollary \ref{deletion-contraction-IX}, we know that
\begin{eqnarray*}
[y^{i}]X_{P}(y)&=&[y^{i}]X_{\widehat{P}^{t}_{f(\{t\})}}(y)+\sum_{j\in T_{t}\setminus \{f(\{t\})\}}[y^{i-1}]X_{\widehat{P}^{t}_{j}}(y)\\
&=&\binom{f([n])+i-1-f(\{t\})}{i}+\sum_{j\in T_{t}\setminus \{f(\{t\})\}}\binom{f([n])-j+i-2}{i-1}\\
&=&\binom{f([n])+i-1}{i}.
\end{eqnarray*}

For the necessity, by induction on $n$, we prove that $[y^{i}]X_{P}(y)<\binom{f([n])+i-1}{i}$ for some $i\leq k$ if $f([n]\setminus T')<f([n])$ for some subset $T'$ of size $k$.
If $n=k+1$, then this is clear as $|P|<\sum\limits_{i=0}^{k}\binom{f([n])+i-1}{i}$. (This is because if $\mathbf{a}\in \mathbb{Z}_{\geq 0}^{n}$ satisfies $\sum\limits_{i\in [n]\setminus T'}a_{i}=f([n])$ then $\mathbf{a}\notin P$.)

Assume that $n>k+1$. Let $t'$ be an element of $[n]\setminus T'$. Then  $T'\subseteq [n]\setminus\{t'\}$ satisfies $f^{t'}_{f(\{t'\})}([n]\setminus \{t'\}\setminus T')=f([n]\setminus T')-f(\{t'\})<f([n])-f(\{t'\})=f^{t'}_{f(\{t'\})}([n]\setminus \{t'\})$.
By the induction hypothesis, $[y^{i}]X_{\widehat{P}^{t'}_{f(\{t'\})}}<\binom{f([n])+i-1-f(t')}{i}$ for some $i\leq k $. By Theorem \ref{subset-monotonicity} and Observation \ref {complete-example}, for all $j\in T_{t'}\setminus \{f(\{t'\})\}$, the coefficient $[y^{i-1}]X_{\widehat{P}^{t'}_{j}}\leq\binom{f([n])-j+i-2}{i-1}$ as $\widehat{P}^{t'}_{j}\subseteq \mathbb{Z}^{n-1}_{\geq 0}$. By Corollary \ref{deletion-contraction-IX},
$$
[y^{i}]X_{P}(y)=[y^{i}]X_{\widehat{P}^{t'}_{f(\{t\})}}(y)+\sum_{j\in T_{t'}\setminus \{f(\{t'\})\}}[y^{i-1}]X_{\widehat{P}^{t'}_{j}}(y)<\binom{f([n])+i-1}{i},
$$
a contradiction.  This completes the proof.
\end{proof}

\begin{theorem}\label{connected-exterior2}
Let $\mathcal{H}=(V,E)$ be a hypergraph and let $\mathrm{Bip} \mathcal{H}$ be its associated bipartite graph. Then for all $k\leq |E|$, we have $[y^{i}]X_{\mathcal{H}}=\binom{|V|+i-2}{i}$ for all $i\leq k$ if and only if $\mathrm{Bip} \mathcal{H}-E'$ is connected for any $E'\subseteq E$ with $|E'|=k$.
\end{theorem}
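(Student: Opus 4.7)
The plan is to deduce this from Lemma \ref{connected-exterior} applied to the hypergraphical polymatroid $P_{\mathcal{H}}$, which lives on the ground set $E$ (so here the role of ``$n$'' is played by $|E|$) with rank function $\mu$, and satisfies $P_{\mathcal{H}}\subset\mathbb{Z}_{\geq 0}^{E}$ since all hypertrees have non-negative entries. The bridge between the two statements is the identity
\begin{equation}\label{mu-c}
\mu(E\setminus E')=|V|-c(\mathrm{Bip}\mathcal{H}-E') \qquad\text{for every }E'\subset E,
\end{equation}
which I would verify as follows. By definition $\mu(E\setminus E')=|\bigcup(E\setminus E')|-c(E\setminus E')$, and the only difference between $\mathrm{Bip}\mathcal{H}|_{E\setminus E'}$ and $\mathrm{Bip}\mathcal{H}-E'$ is that the latter also contains, as isolated vertices, those $v\in V$ that appear in no edge of $E\setminus E'$. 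Each such $v$ contributes $1$ to both $|V|-|\bigcup(E\setminus E')|$ and $c(\mathrm{Bip}\mathcal{H}-E')-c(E\setminus E')$, and \eqref{mu-c} follows. In particular, $\mu(E\setminus E')=|V|-1$ is equivalent to $\mathrm{Bip}\mathcal{H}-E'$ being connected.

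For the ``if'' direction, assume $\mathrm{Bip}\mathcal{H}-E'$ is connected for every $E'$ with $|E'|=k$. Given any $E''$ with $|E''|\leq k$, pick $E'\supset E''$ of size $k$; then $\mathrm{Bip}\mathcal{H}-E''$ is obtained from the connected graph $\mathrm{Bip}\mathcal{H}-E'$ by adding back each $e\in E'\setminus E''$ together with its incident edges, and since each such $e$ is adjacent to at least one $v\in V$ already present, connectedness is preserved. Hence $\mathrm{Bip}\mathcal{H}-E''$ is connected for every $|E''|\leq k$, so by \eqref{mu-c} we have $\mu(E)=|V|-1$ and $\mu(E\setminus J)=\mu(E)$ for all $|J|=k$. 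Lemma \ref{connected-exterior} then gives $y^{i}[X_{\mathcal{H}}]=\binom{\mu(E)+i-1}{i}=\binom{|V|+i-2}{i}$ for all $i\leq k$.

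For the ``only if'' direction, I would first establish the general upper bound $y^{i}[X_{\mathcal{H}}]\leq\binom{\mu(E)+i-1}{i}$, obtained by applying Theorem \ref{subset-monotonicity} to the inclusion of $P_{\mathcal{H}}$ into the polymatroid with constant rank $\mu(E)$ on all nonempty subsets, and then invoking Observation \ref{complete-example}. For $k\geq 1$ the hypothesis with $i=1$ forces $\mu(E)\geq|V|-1$, hence $\mu(E)=|V|-1$ and $\mathrm{Bip}\mathcal{H}$ is connected. Lemma \ref{connected-exterior} applied to $P_{\mathcal{H}}$ then converts the coefficient equalities into $\mu(E\setminus J)=\mu(E)=|V|-1$ for every $|J|=k$, and \eqref{mu-c} translates this into $\mathrm{Bip}\mathcal{H}-J$ being connected. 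The main obstacle I anticipate is a clean derivation of \eqref{mu-c}, in particular the bookkeeping of isolated $V$-vertices, together with the upper bound used above; once these are settled, the theorem reduces directly to Lemma \ref{connected-exterior}.
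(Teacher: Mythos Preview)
Your approach is essentially the same as the paper's: reduce to Lemma \ref{connected-exterior} via the correspondence between $\mu(E\setminus E')$ and the connectivity of $\mathrm{Bip}\mathcal{H}-E'$. You are in fact more careful than the paper, which simply asserts $f(E\setminus E')=f(E)=|V|-1$ without deriving $\mu(E)=|V|-1$ from the coefficient hypothesis; your upper-bound argument via Theorem \ref{subset-monotonicity} and Observation \ref{complete-example} cleanly fills that gap.
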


\begin{proof}
The statement follows from Lemma \ref{connected-exterior} since if $\mathrm{Bip} \mathcal{H}-E'$ is connected, then $f(E\setminus E')=f(E)=|V|-1$, where $f$ is the rank function of $P_{\mathcal{H}}$.
\end{proof}

\begin{corollary}
  For a graph $G=(V,E)$ and an integer $k\geq 0$, the equality $[y^{i}]T_{G}(1,y)=\binom{|E|-i-1}{|V|-2}$ holds for all $i$ with $|E|-|V|+1-k\leq i\leq |E|-|V|+1$ if and only if $G$ is $(k+1)$-edge connected.
\end{corollary}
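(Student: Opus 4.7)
The plan is to deduce the corollary directly from Theorem \ref{connected-exterior2} by viewing the graph $G=(V,E)$ as a hypergraph. Regard $G$ as a hypergraph $\mathcal{H}$ with vertex set $V$ and hyperedge set $E$, each edge of $G$ being a $2$-element subset of $V$. Then $\mathrm{Bip}\mathcal{H}$ is the subdivision of $G$, and a direct check shows that for any $E'\subseteq E$ the submodular function $\mu(E')=|\bigcup E'|-c(E')$ equals the rank of $E'$ in the graphic matroid $M(G)$: it counts the vertices touched by $E'$ minus the number of components of the subgraph $(V(E'),E')$. Hence $P_{\mathcal{H}}$ coincides with the matroid polytope $P(M(G))\subseteq\{0,1\}^{E}$, which is the bridge that turns the hypergraphical Theorem \ref{connected-exterior2} into a statement about the classical Tutte polynomial.

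Next I would translate between $T_{G}$ and $X_{P_{\mathcal{H}}}$. Assuming $G$ is connected (the case $k=0$ or $G$ disconnected is trivial), specializing \eqref{TM} to $M=M(G)$ with $n=|E|$ and $d=|V|-1$ at $x=1$ (so that $x+y-xy=1$) gives $T_{G}(1,y)=y^{-(|V|-1)}\mathscr{T}_{P_{\mathcal{H}}}(1,y)$. Combining with the second identity of \eqref{IX-T} produces
\[
X_{P_{\mathcal{H}}}(y)=y^{|E|-|V|+1}\,T_{G}(1,y^{-1}),\qquad\text{so}\qquad y^{m}[X_{P_{\mathcal{H}}}(y)]=y^{\,|E|-|V|+1-m}[T_{G}(1,y)].
\]
With $i=|E|-|V|+1-m$, the range $m\in\{0,1,\ldots,k-1\}$ corresponds exactly to $|E|-|V|+2-k\le i\le|E|-|V|+1$ in the statement, and the binomial appearing on the Tutte side is the corresponding translate of $\binom{|V|+m-2}{m}$ from Theorem \ref{connected-exterior2}.

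For the connectivity side, removing a set $E'\subseteq E$ of ``hyperedge vertices'' from the subdivision $\mathrm{Bip}\mathcal{H}$ (together with their incident bipartite edges) yields a graph whose connectivity is equivalent to that of $G-E'$, since isolated $V$-vertices of $\mathrm{Bip}\mathcal{H}-E'$ are precisely the isolated vertices of $G-E'$. Thus ``$\mathrm{Bip}\mathcal{H}-E'$ is connected for every $E'\subseteq E$ with $|E'|=k-1$'' is exactly the statement that $G$ is $k$-edge-connected, and applying Theorem \ref{connected-exterior2} with parameter $k-1$ in place of $k$ closes the argument.

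The main obstacle I expect is purely bookkeeping: aligning the index substitution $m\leftrightarrow i$ across the change of variable $y\mapsto y^{-1}$, and verifying carefully the identification $P_{\mathcal{H}}=P(M(G))$, which rests on the coincidence of $\mu$ with the graphic matroid rank function. Once these are in hand, Theorem \ref{connected-exterior2} does the substantive work.
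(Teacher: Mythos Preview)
Your approach—identifying $G$ with a hypergraph so that $P_{\mathcal{H}}=P(M(G))$, translating $T_G(1,y)$ to $X_{P_{\mathcal{H}}}$ via \eqref{TM} and \eqref{IX-T}, and invoking Theorem~\ref{connected-exterior2} with parameter $k-1$—is exactly the intended route (the paper states the corollary without proof, as a direct consequence of that theorem). The identification $\mu=$ graphic rank, the connectivity translation $\mathrm{Bip}\mathcal{H}-E'$ connected $\Leftrightarrow G-E'$ connected, and the index range $|E|-|V|+2-k\le i\le|E|-|V|+1$ all check out.

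There is, however, one genuine gap. You assert without verification that ``the binomial appearing on the Tutte side is the corresponding translate of $\binom{|V|+m-2}{m}$.'' If you actually perform the substitution $m=|E|-|V|+1-i$, you obtain
\[
y^{i}[T_G(1,y)]\;=\;\binom{|E|-i-1}{\,|E|-|V|+1-i\,}\;=\;\binom{|E|-i-1}{|V|-2},
\]
which is \emph{not} the $\binom{|E|+|V|+i-2}{i}$ printed in the corollary. A quick sanity check confirms your formula and refutes the printed one: at the top degree $i=|E|-|V|+1$ the leading coefficient of $T_G(1,y)$ is always $1$, in agreement with $\binom{|V|-2}{0}=1$, whereas the printed binomial gives $\binom{2|E|-1}{|E|-|V|+1}$; for $K_4$ (which is $3$-edge-connected) one has $T_{K_4}(1,y)=y^{3}+3y^{2}+6y+6$, matching $\binom{5-i}{2}$ for $i=1,2,3$. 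Thus the corollary as stated contains a misprint in the binomial, and your argument is in fact a correct proof of the corrected statement. You should point out this discrepancy rather than claim the two binomials coincide.
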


\begin{remark}
Let $\mathcal{H}=(V,E)$ be a hypergraph and $\mathrm{Bip} \mathcal{H}$ be its associated bipartite graph. For any $k\geq0$, if there is a subset  $E'\subseteq E$ with $|E'|=k$ such that $\mathrm{Bip} \mathcal{H}-E'$ is connected and there are at least two edges incident with $e$, in $\mathrm{Bip} \mathcal{H}$, for all $e\in E'$, then $[y^{i}]X_{\mathcal{H}}>0$ for all $i\leq k$.

Indeed, given an order of $E$ such that the elements of $E'$ are the last $|E'|$ elements of $E$, there exists a vector $\mathbf{a}=(a_{1},\ldots,a_{|E|-|E'|},0,\ldots,0)$) which is a basis of the hypergraphical polymatroid $P_{\mathcal{H}}$, since $\mathrm{Bip} \mathcal{H}-E'$ is connected. Let $f$ be the rank function of $P_{\mathcal{H}}$. Then for all $e\in E'$ and for all subsets $E''\subseteq E$ with $e=\min(E'')$, we have that $f(E'')>0$ since there are at least two edges incident with $e$. Note that $E''\subseteq E'$ and that $\sum_{e'\in E''}a_{e'}=0<f(E'')$. Thus $e\notin \mathrm{Ext}_{P_{\mathcal{H}}}(\mathbf{a})$ and $\epsilon(\mathbf{a})\leq |E|-k$. By the interpolatory property of the exterior polynomial (see \cite[Theorem 14]{Guan3}), we have that $[y^{i}]X_{\mathcal{H}}>0$ for all $i\leq k$.
\end{remark}

\section{Concluding remarks}
\label{sec6}

The main use of deletion-contraction, for graphs and, to a lesser degree, matroids, is to compute the Tutte polynomial recursively.
Our deletion-contraction formula, Theorem \ref{deletion-contraction-J}, is a sum over parallel slices of a polymatroid. It operates within the category of polymatroids and complements the activity-based Definition \ref{pop} and the state sum given in \cite[Theorem 10.6]{Bernardi} (whose underlying idea is a version of the Crapo interval subdivision) with a third, recursive approach.

However, even if a polymatroid is hypergraphical, we do not see a natural reason for all of its slices, apart from the two extremal ones, to be themselves hypergraphical. Hence, we pose the following question.

\begin{question}
Are all slices of a hypergraphical polymatroid also hypergraphical polymatroids?
\end{question}

A positive answer would be desirable in order to have a recursive method of computation that is fully within the realm of hypergraphs. In case the answer is negative, which appears likely, we relax our question as follows.

\begin{question}
Can we express $\mathscr{T}_{P_{\mathcal{H}}}(x,y)$, for an arbitrary hypergraph $\mathcal{H}$, in terms of  polymatroid Tutte polynomials of some (in a suitable sense) smaller hypergraphs?
\end{question}

\begin{notes}[Acknowledgements]
This paper was written while the first author visited the then Tokyo Institute of Technology, currently Institute of Science Tokyo, in 2022-2023. It is a pleasure to acknowledge the hospitality, as well as the financial support from the China Scholarship Council (No.\ 202206310079), that made the visit possible. XG was also partially supported by the National Natural Science Foundation of China (No.\ 12401462).

XJ was supported by the National Natural Science Foundation of China (No.\ 12171402).

TK was supported by consecutive Japan Society for the Promotion of Science (JSPS) Grants-in-Aid for Scientific Research C (Nos.\ 17K05244 and 23K03108).
\end{notes}


\end{document}